\theoremstyle{plain}
\newtheorem{theorem}{Theorem}
\newtheorem{lemma}{Lemma}
\newtheorem{proposition}{Proposition}
\newtheorem{corollary}{Corollary}
\theoremstyle{definition}
\newtheorem{definition}{Definition}
\newtheorem{remark}{Remark}
\providecommand{\keywords}[1]
\title{Dynamical Properties for Composition Operators on \texorpdfstring{$H^{2}(\mathbb{C}_{+})$}{lg}}
\author{Carlos F. \'{A}lvarez}
\address{
Instituto de Matemáticas, Pontificia Universidad Católica de Valparaíso, Blanco Viel 596, Cerro Barón, Valparaíso, Chile}
\email{
carlos.alvarez.e@pucv.cl}
\author{Javier Henríquez-Amador}
\address{Departamento de Matemáticas, Universidad del Valle, Ciudadela Universitaria Meléndez
Edificio E20, Cali, Colombia}
\email{javier.henriquez@correounivalle.edu.co}
\thanks{CF. A. was supported by ANID Proyecto FONDECYT Postdoctorado 3230048, Chile}
\subjclass{Primary 47A16, 47B33; Secondary 37D45}\makeatletter
\date{\today}
\keywords{Expansivity; Shadowing; Li-Yorke chaos; Composition operators}
\begin{document}

\begin{abstract}
In this article, we investigate expansivity, Li-Yorke chaos and shadowing properties for composition operators $C_{\phi}f = f \circ \phi$ induced by affine self-maps $\phi$ of the right half-plane $\mathbb{C}_{+}$ on the Hardy-Hilbert space $H^{2}(\mathbb{C_{+}})$.
\end{abstract}

\maketitle
\markright{Dynamical Properties for Composition Operators on Hardy Space}


\section{Introduction}
In this paper, $\mathbb{C}$ denotes the complex plane and $\mathbb{C}_{+}:=\{z\in \mathbb{C}: Re(z)>0\}$ represents the open right half-plane. $\mathbb{D}$ and $\mathbb{T}$ denote the open unit disc and the unit circle in $\mathbb{C}$, respectively. Let $U\subset \mathbb{C}$ be an open set and let $Y$ be a Banach space of holomorphic functions $\phi:U \to U$. 
A \textit{composition operator with symbol $\phi$} is defined by $$C_{\phi}f=f\circ \phi, \  {\rm{for\ any \ }} f\in Y.$$ 

Composition operators are a class of linear operators studied in functional analysis, complex dynamics, and ergodic theory. When $U=\mathbb{C}_+$ and $C_{\phi}$ is defined on the Hardy Hilbert space of the open right half-plane $H^{2}(\mathbb{C}_{+})$, this operator has been widely studied in \cite{EJ,FavaroHaiSeveriano, KumarSri,M1,M2, NS}. It is important to mention that the only linear fractional self-maps of $\mathbb{C}_+$ that induce continuous composition operators on $H^2(\mathbb{C}_+)$ are the \textit{affine maps} 
\begin{center}
$\phi(w)=aw+b$ with $a>0$ and $Re(b)\geq 0.$
\end{center}\vspace{0.2cm}

\textit{Linear Dynamics} field focuses on the study of dynamical systems that involve continuous linear operators on infinite-dimensional Banach (or Fréchet) spaces. Within this area, important concepts such as \textit{chaos, expansivity} and \textit{shadowing} have emerged. For precise definitions of these notions see Subsection \ref{sld}. Exploring these dynamical properties has become a common interest among researchers in the fields of functional analysis and dynamical systems. For a more detailed understanding, readers are encouraged to refer to the books \cite{BM,GP} and the papers \cite{BBMP2, ES}, which provide additional references. To delve into the dynamics of linear operators on Hilbert spaces, readers can consult \cite{GMM2017, MM, P}.\vspace{0.2cm}

In the early 1970’s Eisenberg and Hedlund studied relationships between expansivity and spectrum of operators on Banach spaces \cite{EH}. The \textit{Li-Yorke chaos} notion was introduced by Li and Yorke in \cite{LY} for interval maps in 1975. This version of chaos has also been studied within context of linear dynamics, for instance, as exemplified \cite{BBMAP, BBMP2} and related references. It is worth mentioning that Godefroy and Shapiro introduced chaos for linear operators on Fréchet spaces \cite{GS}, adopting Devaney's definition of chaos, see \cite{GP}. Starting the 2000's, Mazur investigated expansivity and shadowing for the class of normal operators on Hilbert spaces \cite{MM}. In 2018, the notions of expansivity and shadowing has been very good explored for continuous linear operators on Banach spaces \cite{ES}.\vspace{0.2cm}

There have been several works exploring composition operators within the framework of linear dynamics. For instance, Li-Yorke chaos and expansivity has been characterized for composition operators $T_{\phi}: f\mapsto f \circ \phi$ acting on $L^{p}$-spaces with $1\leq  p < \infty$, see \cite{BDP} and \cite{MExP}, respectively. Other kind of dynamical properties for weighted composition operators acting on the smooth function space were investigated in \cite{P17}.\vspace{0.2cm}

In this paper, we explore expansivity, shadowing and Li-Yorke chaos for the composition operator $C_{\phi}$ defined on $H^{2}(\mathbb{C}_+)$. These results was inspired by the results of Noor and Severiano in \cite{NS}, since the class of composition operators under investigation is not hypercyclic, that is, $(C^{n}_{\phi}f)_{n\in \mathbb{N}\cup \{0\}}$ is not dense in $H^{2}(\mathbb{C}_+)$. Moreover, the notions of expansivity and shadowing has not been explored in this context. We provide examples of operators exhibiting expansivity and the shadowing property which are not hyperbolic, but they are not Li-Yorke chaotic. These examples show the richness of linear dynamics and how this differs from finite nonlinear dimensional dynamics.\vspace{0.2cm}

We aim to contribute to dynamical properties of composition operators $C_{\phi}$ on the Hardy Hilbert space $H^{2}(\mathbb{C}_+)$. A particular emphasis is placed on the shadowing property, a key concept that connects the local and global dynamics of operators. The shadowing property, originally studied in the context of nonlinear dynamics, provides a framework for understanding the approximation of pseudo-orbits by true orbits in the operator setting, see \cite{Kawaguchi}. We characterize the symbols $\phi$ that induce operators $C_{\phi}$ with some kind of shadowing. \vspace{0.2cm}

Let us now describe the organization of the article. In section \ref{preli} we fix the notations and recall the definitions and a few results which will be important in our work. In section \ref{EH2}, we characterize some notions of expansivity and show that in certain cases, there exist some equivalences between these notions. In section \ref{SH2}, we provide conditions to guarantee or not the shadowing property and we give a characterization of the positive shadowing property. Finally, we show that composition operators on the Hardy space of the open right half-plane can not be Li-Yorke chaotic. 

\section{Preliminaries}\label{preli}
In this section, we fix some terminology and recall some definitions and results about spectral theory, linear dynamics and Hardy space on the open right half-plane.  Let $X$ be a complex Banach space, the set $S_X = \{x \in X : \|x\|= 1\}$ is the \textit{unit sphere} of $X$. In addition, by an operator on $X$ we mean a continuous linear map $T:X \to X$. The \textit{spectrum} $\sigma(T)$ of $T$ is the set $$\sigma(T)= \{\lambda \in \mathbb{C} : T-\lambda I \ {\rm{is \ not \ invertible \ continuous \ operator}} \}.$$

Recall that $\sigma(T)\subset \mathbb{C}$ is always a non-empty compact set. Moreover, if $T$ is a self-adjoint operator on a complex Hilbert space $H$ and $x\in H$, then there is a unique positive Radon measure $\mu$ on $\sigma(T)$ such that $$\langle f(T)x, x\rangle
 =\displaystyle\int_{\sigma(T)}f(t)d\mu(t) \ \ {\rm{for \ all}} \ f \in C(\sigma(T)).$$
So, $\mu(\sigma(T)) = \|x\|^{2}$. The measure $\mu$ is called the \textit{spectral measure} associated to $T$ and $x$.

\subsection{Notions of linear dynamics}\label{sld}
Expansivity is a well known property of dynamical systems. For continuous operators, we provide a definition that contain several versions of expansivity. 
\begin{definition}\label{d1}
Let $T:X\to X$ be an operator on $X$. We say that
\begin{enumerate}
    \item $T$ is \textit{expansive} if $T$ is invertible and for every $z\in S_{X}$ there exists $n\in \mathbb{Z}$ such that $\|T^{n}z\|\geq 2.$
    \item  $T$ is \textit{positively expansive} if for every $z\in S_{X}$ there exists $n\in \mathbb{N}$ such that $\|T^{n}z\|\geq 2.$
    \item  $T$ is \textit{uniformly expansive} if $T$ is invertible and there exists $n\in \mathbb{N}$ such that $$z\in S_{X}\Longrightarrow \|T^{n}z\|\geq 2 \ {\rm{or}} \ \|T^{-n}z\|\geq 2.$$
    \item  $T$ is \textit{uniformly positively expansive} if there exists $n\in \mathbb{N}$ such that $$z\in S_{X}\Longrightarrow \|T^{n}z\|\geq 2.$$
\end{enumerate}
\end{definition}

The following result characterizes the expansivity for operators defined on Hilbert spaces. 
\begin{theorem}[{\cite[Theorem~23]{ES}}]\label{es}
Let $T$ be a continuous operator on a Hilbert space $H$. The following assertions hold:
\begin{enumerate}
    \item $T$ is positively expansive if and only if  $\displaystyle\sup_{n\in\mathbb{N}}\left\|T^{n}(x)\right\|=+\infty,$ for every nonzero $x\in H$.
    \item $T$ is uniformly positively expansive if and only if $\displaystyle\lim_{n\rightarrow+\infty}\left\|T^{n}(x)\right\|=+\infty$ uniformly on $S_{H}$.
    If, in addition, $T$ is an invertible normal operator then 
    \item $T$ is expansive if and only if $\sigma_{p}(T^{*}T)\cap\mathbb{T}=\emptyset.$
 \end{enumerate}
\end{theorem}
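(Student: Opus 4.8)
The plan is to treat the three statements separately: (1) and (2) are soft homogeneity/iteration arguments valid on any Banach space, while (3) is genuinely spectral. For (1), the only nontrivial implication is that positive expansivity forces $\sup_{n}\|T^{n}x\|=+\infty$, which I would obtain by a self-similarity bootstrap. Given $z\in S_H$, positive expansivity yields $n_1\ge 1$ with $\|T^{n_1}z\|\ge 2$ (note $n_1\ne 0$ since $\|T^0z\|=1$); normalizing $z_1=T^{n_1}z/\|T^{n_1}z\|$ and reapplying the hypothesis gives $n_2\ge 1$ with $\|T^{n_1+n_2}z\|=\|T^{n_1}z\|\,\|T^{n_2}z_1\|\ge 4$, and iterating produces times $m_k\to\infty$ with $\|T^{m_k}z\|\ge 2^{k}$, so the orbit is unbounded; scaling covers every nonzero $x$, and the converse is immediate. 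For (2), if $T$ is uniformly positively expansive there is $n_0$ with $\|T^{n_0}x\|\ge 2\|x\|$, hence $\|T^{kn_0}x\|\ge 2^{k}\|x\|$; writing $n=kn_0+r$ with $0\le r<n_0$ and using invertibility to bound $\|T^{r}x\|\ge c\|x\|$ with $c=\min_{0\le r<n_0}\|(T^{r})^{-1}\|^{-1}>0$ gives $\|T^{n}x\|\ge c\,2^{\lfloor n/n_0\rfloor}\|x\|\to\infty$ uniformly on $S_H$. The converse follows by taking $R=2$ in the uniform divergence.

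For (3) I would pass to the positive operator $A:=T^{*}T$. Since $T$ is normal and invertible, $T$ and $T^{*}$ commute, $A$ is positive, self-adjoint and invertible, and $\sigma(A)$ is a compact subset of $(0,\infty)$. Normality and invertibility give $(T^{*})^{n}T^{n}=(T^{*}T)^{n}=A^{n}$ for all $n\in\mathbb{Z}$ (negative powers via the inverse), so for each $z\in S_H$,
$$\|T^{n}z\|^{2}=\langle A^{n}z,z\rangle=\int_{\sigma(A)}t^{\,n}\,d\mu_{z}(t),$$
where $\mu_z$ is the spectral measure of $A$ and $z$, a probability measure on $\sigma(A)$. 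Because $1$ is the only point of $\mathbb{T}\cap(0,\infty)$, it suffices to show that $T$ is expansive if and only if no unit vector $z$ has $\mu_z=\delta_1$.

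The remaining step is a measure-theoretic dichotomy: $\sup_{n\in\mathbb{Z}}\int t^{n}\,d\mu_z$ equals $1$ when $\mu_z=\delta_1$ and equals $+\infty$ otherwise. If $\mu_z(\{t>1\})>0$ then $\mu_z([1+\varepsilon,\infty))>0$ for some $\varepsilon>0$ and $\int t^{n}\,d\mu_z\ge(1+\varepsilon)^{n}\mu_z([1+\varepsilon,\infty))\to\infty$ as $n\to+\infty$; symmetrically, mass on $\{t<1\}$ forces divergence as $n\to-\infty$. Hence a unit vector violates $\|T^{n}z\|\ge 2$ for every $n\in\mathbb{Z}$ exactly when $\mu_z=\delta_1$. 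Finally $\mu_z=\delta_1$ is equivalent to $Az=z$: since $A-I$ is self-adjoint, $\langle(A-I)^{2}z,z\rangle=\int(t-1)^{2}\,d\mu_z=0$ reads $\|(A-I)z\|^{2}=0$, and the converse is clear from the functional calculus. Thus such a $z$ exists iff $1\in\sigma_p(A)$, i.e. iff $\sigma_p(T^{*}T)\cap\mathbb{T}\ne\emptyset$, giving the stated equivalence.

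The main obstacle is concentrated in (3): establishing $\|T^{n}z\|^{2}=\langle A^{n}z,z\rangle$ for all integers $n$, where normality together with invertibility is essential, and then making the dichotomy rigorous — verifying that any mass bounded away from $1$ on either side forces the orbit norms to blow up in the corresponding time direction, and that the extremal case $\mu_z=\delta_1$ is exactly the eigenvalue-$1$ condition. Parts (1) and (2) are comparatively routine once the self-similarity and iteration ideas are set up.
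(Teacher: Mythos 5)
The paper itself contains no proof of this statement: Theorem~\ref{es} is imported verbatim from \cite{ES} and used as a black box, so there is no internal argument to compare against and I assess your proposal on its own terms. Parts (1) and (3) are correct. The bootstrap in (1) (normalize $T^{n_1}z$, reapply the hypothesis, and use linearity to get $\|T^{n_1+n_2}z\|=\|T^{n_1}z\|\,\|T^{n_2}z_1\|\ge 4$, then iterate) is exactly the standard argument. Your treatment of (3) is also sound: normality plus invertibility give $\|T^{n}z\|^{2}=\langle (T^{*}T)^{n}z,z\rangle=\int_{\sigma(T^{*}T)}t^{n}\,d\mu_{z}(t)$ for all $n\in\mathbb{Z}$ (negative powers because $T^{-1}$ is normal with $(T^{-1})^{*}=(T^{*})^{-1}$); the measure dichotomy (mass above $1$ forces blow-up as $n\to+\infty$, mass below $1$ forces blow-up as $n\to-\infty$, and otherwise $\mu_{z}=\delta_{1}$) is rigorous; and the identity $\|(T^{*}T-I)z\|^{2}=\int(t-1)^{2}\,d\mu_{z}$ correctly converts the failure of expansivity into $1\in\sigma_{p}(T^{*}T)$, which is indeed the only point of $\mathbb{T}$ that the positive operator $T^{*}T$ can have in its point spectrum.

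The one genuine defect is in part (2): you obtain $\|T^{r}x\|\ge c\|x\|$ for $0\le r<n_{0}$ \emph{``using invertibility''}, with $c=\min_{0\le r<n_{0}}\|(T^{r})^{-1}\|^{-1}$, but the theorem is stated for an arbitrary continuous operator on $H$; invertibility is hypothesized only in part (3). This is not a cosmetic point for this paper: part (2) is applied in Proposition~\ref{p3} to composition operators $C_{\phi}$ with $a=1$ and $Re(b)>0$, which are \emph{not} invertible, so your argument as written would not cover the very cases where the paper uses the result. Fortunately the bound you need is a consequence of uniform positive expansivity itself and requires no inverse: from $\|T^{n_{0}}y\|\ge 2\|y\|$ for all $y$, one gets for $0\le r<n_{0}$
$$2\|x\|\le\|T^{n_{0}}x\|=\left\|T^{n_{0}-r}\left(T^{r}x\right)\right\|\le\|T^{n_{0}-r}\|\,\|T^{r}x\|,$$
hence $\|T^{r}x\|\ge 2\|x\|\big/\max_{0<s\le n_{0}}\|T^{s}\|$. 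Taking this as the constant $c$, your estimate $\|T^{n}x\|\ge c\,2^{\lfloor n/n_{0}\rfloor}\|x\|$ goes through verbatim, and part (2) is proved in the stated generality. With that one-line repair, the whole proposal is correct.
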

 
The definition of shadowing has a simplified formulation in the setting of linear dynamics. Here, we use this formulation. 

\begin{definition}
Let $T:X \to X$ be an operator on $X$. A sequence $(x_n)_{n\in \mathbb{Z}}\subset X$ is called a $\delta$-pseudotrajectory of $T$, where $\delta > 0$, if
$$\|T x_{n} - x_{n+1}\| \leq \delta, \  {\rm{for \ all}}  \ n\in \mathbb{Z}.$$
\end{definition}

\begin{definition}
 Let $T : X\to X$ be an invertible operator on $X$. Then $T$ is said to have the \textit{shadowing property} if for every $\epsilon>0$ there exists $\delta > 0$ such that every $\delta$-pseudotrajectory $(x_n)_{n\in \mathbb{Z}}$ of $T$ is $\epsilon$-shadowed by a real trajectory of $T$, that is, there exists $x\in X$ such that  $$\|T^{n}x - x_{n}\| \leq \epsilon, \  {\rm{for \ all}} \ n\in \mathbb{Z}.$$
\end{definition}

When $T$ is an operator not necessarily invertible, we can define the notion of positive shadowing property for $T$. In such case, we replace the set $\mathbb{Z}$ by $\mathbb{N}$ in the above definition.

\begin{theorem}\label{spr}
Let $T:X\to X$ be an invertible operator on $X$. Suppose that $X =M \oplus N$, where $M$ and $N$ are closed subspaces of $X$ with $T(M)\subset M $ and $T^{-1}(N) \subset N$. If $\sigma(T|M)\subset \mathbb{D}$ and $\sigma(T^{-1}|N) \subset \mathbb{D}$, then $T$ has the shadowing property. 
\end{theorem}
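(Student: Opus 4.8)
The plan is to realize a shadowing orbit as a bounded solution of the linearized error recursion, constructed separately along $M$ and $N$. First I would record the quantitative consequence of the spectral hypotheses: since $\sigma(T|M)\subset\mathbb{D}$ and $\sigma(T^{-1}|N)\subset\mathbb{D}$, Gelfand's spectral radius formula gives that the spectral radii of $A:=T|M$ and $B:=T^{-1}|N$ are strictly less than $1$, so there are constants $C\geq 1$ and $0<\rho<1$ with $\|A^{n}\|\leq C\rho^{n}$ and $\|B^{n}\|\leq C\rho^{n}$ for all $n\geq 0$. These geometric decay estimates are the only way the spectral information enters the argument.

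Next I would reformulate shadowing as solving an inhomogeneous difference equation. Given a $\delta$-pseudotrajectory $(x_n)_{n\in\mathbb{Z}}$, set $e_n:=Tx_n-x_{n+1}$, so $\|e_n\|\leq\delta$. A direct computation shows that if $(y_n)_{n\in\mathbb{Z}}$ is any bounded sequence with $y_{n+1}=Ty_n+e_n$, then $x:=x_0+y_0$ satisfies $T^{n}x-x_n=y_n$ for all $n$; indeed $T^{n}x-x_n$ obeys the same recursion $w_{n+1}=Tw_n+e_n$, which (as $T$ is invertible) determines the whole bi-infinite sequence from its value at $n=0$, where the two sequences agree. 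Hence it suffices to produce a bounded solution with $\sup_n\|y_n\|\leq\epsilon$.

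Then I would split along the decomposition using the bounded projections $P$ onto $M$ and $Q=I-P$ onto $N$, writing $y_n=p_n+q_n$ with $p_n=Py_n$ and $q_n=Qy_n$. Because $T(M)\subset M$ gives $QTp=0$ for $p\in M$, the unstable part decouples: $q_{n+1}=T_{NN}q_n+Qe_n$ with $T_{NN}:=QT|_N$, while the stable part reads $p_{n+1}=Ap_n+(PTq_n+Pe_n)$. The crucial algebraic identity is $T_{NN}B=I_N$, valid for every $v\in N$ since $T_{NN}Bv=QTT^{-1}v=Qv=v$ by $T^{-1}(N)\subset N$. Using it, I would solve the $N$-equation by the backward-convergent series $q_n=-\sum_{j\geq 1}B^{j}(Qe_{n+j-1})$, and then, feeding this in, solve the $M$-equation by the forward-convergent series $p_n=\sum_{j\geq 1}A^{j-1}(PTq_{n-j}+Pe_{n-j})$. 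Each series converges by the estimates $\|A^{n}\|,\|B^{n}\|\leq C\rho^{n}$, and summing the resulting geometric bounds yields $\sup_n\|y_n\|\leq K\delta$ for a constant $K$ depending only on $C,\rho,\|P\|,\|Q\|,\|T\|$; choosing $\delta=\epsilon/K$ then completes the proof.

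The main obstacle, and the point that deserves care, is that the hypotheses are genuinely one-sided: $N$ is only assumed invariant under $T^{-1}$ rather than $T$, and $T|M$ need not be invertible on $M$. One therefore cannot simply diagonalize $T$ as $A\oplus B^{-1}$ and run two independent geometric sums. What rescues the construction is that $T(M)\subset M$ forces a block-triangular form in which the unstable component $q_n$ still decouples, and that the identity $T_{NN}B=I_N$ holds unconditionally, so the backward summation for $q_n$ is legitimate even though $T_{NN}$ is a priori only a left inverse of $B$. Since $A$ and $B$ are iterated only forward, the possible non-invertibility of $T|M$ never enters, which is exactly why the weaker one-sided invariance hypotheses suffice.
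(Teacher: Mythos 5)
Your proof is correct. There is, however, no in-paper argument to compare it against: Theorem~\ref{spr} is stated without proof, as a result imported from the linear dynamics literature (it is the ``generalized hyperbolicity implies shadowing'' theorem in the style of \cite{ES}), and the paper only invokes it through Corollaries~\ref{corsh} and~\ref{corsh1}. Your argument is essentially the standard proof of that result, and it is complete: you reduce shadowing to exhibiting a bounded solution of the error recursion $y_{n+1}=Ty_n+e_n$ (the uniqueness-from-$y_0$ observation, valid since $T$ is invertible, makes this reduction legitimate), you solve along $M$ by forward geometric sums and along $N$ by backward sums, and you use the identity $(QT|_N)(T^{-1}|_N)=I_N$ in place of an inverse of $T|_N$, which need not exist. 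This is exactly the right way to handle the one-sided invariance hypotheses $T(M)\subset M$, $T^{-1}(N)\subset N$ --- the only delicate point in the theorem --- since they yield a block upper-triangular form in which the $N$-component decouples, and your series for $q_n$ and $p_n$ do satisfy the two coupled recursions (a two-line check with $T_{NN}B^{j}=B^{j-1}$). It is worth noting that the special case of your construction with $N=\{0\}$, namely the forward sum $\sum_{j\geq 1}A^{j-1}e_{n-j}$ with geometric bound $K\delta/(1-\rho)$, is precisely the computation the paper does carry out explicitly (following \cite[Theorem~13]{ES}) inside the proof of its assertion~\ref{S1}, so your proof can be read as the two-sided extension of the one fragment of this argument that appears in the paper.
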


An operator $T:X\to X$ is said to be \textit{hyperbolic} if $\sigma(T) \cap \mathbb{T} = \emptyset$. As consequence of Theorem \ref{spr}, we have the following results:

\begin{corollary}\label{corsh}
Every invertible hyperbolic operator $T:X\to X$ has the shadowing property. 
\end{corollary}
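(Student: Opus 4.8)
The plan is to reduce the statement to Theorem~\ref{spr} by producing a $T$-invariant splitting of $X$ adapted to the way the spectrum sits relative to the unit circle. Since $T$ is hyperbolic, $\sigma(T)\cap\mathbb{T}=\emptyset$, so the compact set $\sigma(T)$ is the disjoint union of the two compact sets
$$\sigma_{-}=\sigma(T)\cap\mathbb{D} \qquad\text{and}\qquad \sigma_{+}=\sigma(T)\cap\{\lambda\in\mathbb{C}:|\lambda|>1\}.$$
Because these pieces are relatively open and closed in $\sigma(T)$, they are separated by a finite union of rectifiable Jordan curves, and the Riesz holomorphic functional calculus applies.

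First I would form the Riesz idempotent associated to $\sigma_{-}$. Choosing a contour $\Gamma$ that encircles $\sigma_{-}$ once and leaves $\sigma_{+}$ outside, set
$$P=\frac{1}{2\pi i}\int_{\Gamma}(\lambda I-T)^{-1}\,d\lambda.$$
Then $P$ is a bounded projection commuting with $T$, and putting $M=P(X)$ and $N=(I-P)(X)$ gives a direct sum decomposition $X=M\oplus N$ into closed $T$-invariant subspaces with $\sigma(T|_{M})=\sigma_{-}$ and $\sigma(T|_{N})=\sigma_{+}$ (Riesz decomposition theorem). In particular $T(M)\subset M$, as required.

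Next I would check the remaining hypotheses of Theorem~\ref{spr}. By construction $\sigma(T|_{M})=\sigma_{-}\subset\mathbb{D}$. Since $0\notin\sigma_{+}=\sigma(T|_{N})$, the restriction $T|_{N}$ is invertible on $N$, hence $T(N)=N$ and therefore $T^{-1}(N)=N\subset N$; moreover $T^{-1}|_{N}=(T|_{N})^{-1}$. By the spectral mapping theorem applied to $z\mapsto 1/z$ on a neighborhood of $\sigma_{+}$,
$$\sigma\bigl(T^{-1}|_{N}\bigr)=\{1/\lambda:\lambda\in\sigma_{+}\}\subset\mathbb{D},$$
because $|\lambda|>1$ for every $\lambda\in\sigma_{+}$. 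Thus both spectral conditions of Theorem~\ref{spr} hold, and the theorem yields the shadowing property for $T$.

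The only genuinely substantive point is the construction of the invariant splitting, i.e.\ justifying that the spectral separation produced by hyperbolicity gives a bounded projection via the functional calculus; once the decomposition is in hand, the verification of the hypotheses of Theorem~\ref{spr} is a direct bookkeeping of spectra under restriction and inversion. I would therefore expect the contour/Riesz-projection step to be the main (though standard) obstacle, and I would be careful to record that $N$ is genuinely $T^{-1}$-invariant rather than merely $T$-invariant, which is exactly what the invertibility of $T|_{N}$ guarantees.
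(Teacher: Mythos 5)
Your proof is correct and follows exactly the route the paper intends: the paper states this corollary as an immediate consequence of Theorem~\ref{spr}, and your Riesz-projection decomposition $X=M\oplus N$ with $\sigma(T|_{M})\subset\mathbb{D}$ and $\sigma(T^{-1}|_{N})\subset\mathbb{D}$ is precisely the standard argument being invoked. The spectral bookkeeping (invariance of $M$, invertibility of $T|_{N}$ giving $T^{-1}(N)=N$, and the spectral mapping theorem for $z\mapsto 1/z$) is all handled correctly.
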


\begin{corollary}\label{corsh1}
If $T$ is an invertible normal operator on a Hilbert space $H$, then $T$ has the shadowing property
if and only if $T$ is hyperbolic.    
\end{corollary}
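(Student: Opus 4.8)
The statement is an equivalence, so I would treat the two implications separately, the forward one being essentially free and the reverse one carrying all the content.

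For ``$T$ hyperbolic $\Rightarrow$ shadowing'' no normality is needed: this is exactly Corollary~\ref{corsh}. If one prefers to see it as an instance of Theorem~\ref{spr}, I would use the spectral measure $E$ of the normal operator and set $M=E(\mathbb{D})H$ and $N=E(\mathbb{C}\setminus\overline{\mathbb{D}})H$. Since the spectral projections of a normal operator commute with $T$, these are closed $T$-reducing subspaces with $H=M\oplus N$, $T(M)\subseteq M$, $T^{-1}(N)\subseteq N$, and hyperbolicity ($\sigma(T)\cap\mathbb{T}=\emptyset$) forces $\sigma(T|_M)\subseteq\mathbb{D}$ and $\sigma(T^{-1}|_N)\subseteq\mathbb{D}$; Theorem~\ref{spr} then yields the shadowing property.

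For the reverse implication I would argue by contraposition: assuming $T$ normal, invertible and \emph{not} hyperbolic, I would exhibit an $\epsilon>0$ for which, no matter how small $\delta>0$ is, some $\delta$-pseudotrajectory admits no $\epsilon$-shadowing orbit. Fix $\lambda\in\sigma(T)\cap\mathbb{T}$. The two features of normality I would exploit are that $\sigma(T)=\sigma_A(T)$, so that $\lambda$ is an approximate eigenvalue even when it is not an eigenvalue, and that $T-\lambda I$ is again normal, so that any unit vector $v$ satisfies $\|(T^*-\bar\lambda I)v\|=\|(T-\lambda I)v\|$. Thus for every $\eta>0$ I can choose a unit vector $v$ with $\|(T-\lambda I)v\|<\eta$ and at the same time control the action of $T^*$ on $v$. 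The pseudotrajectory I would use is a finite ``resonant forcing'' in the direction $v$: set $x_n=0$ for $n\le 0$ and $x_{n+1}=Tx_n+\delta\lambda^{n}v$ for $0\le n<N$, with zero error afterwards. By construction the step errors are exactly $\delta\lambda^n v$, of norm $\delta$, so this is a genuine $\delta$-pseudotrajectory regardless of whether $v$ is a true eigenvector. If it were $\epsilon$-shadowed by an orbit $T^n x$, then $d_n=x_n-T^n x$ satisfies $\|d_n\|\le\epsilon$ and $d_{n+1}=Td_n+\delta\lambda^{n}v$; projecting onto $v$ and writing $a_n=\langle d_n,v\rangle$, the identity $\langle Td_n,v\rangle=\lambda a_n+\langle d_n,(T^*-\bar\lambda I)v\rangle$ gives the scalar recursion $a_{n+1}=\lambda a_n+\delta\lambda^{n}+\beta_n$ with $|\beta_n|\le\eta\epsilon$, whose solution over the window satisfies $|a_N-\lambda^N a_0|\ge N(\delta-\eta\epsilon)$, while $|a_N-\lambda^N a_0|\le 2\epsilon$ since $\|d_n\|\le\epsilon$ and $|\lambda|=1$.

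The contradiction is then arranged by the order of the choices, and this is the one delicate point I want to stress. The quality $\eta$ of the approximate eigenvector may be taken arbitrarily small \emph{after} $\delta$ and $\epsilon$ are fixed: choosing $\eta\le\delta/(2\epsilon)$ gives $\delta-\eta\epsilon\ge\delta/2$, whence $N\delta/2\le 2\epsilon$, i.e.\ $N\le 4\epsilon/\delta$. Since the length $N$ of the forcing window is free, any $N>4\epsilon/\delta$ produces a $\delta$-pseudotrajectory that cannot be $\epsilon$-shadowed, so $T$ fails to have shadowing. The main obstacle throughout is precisely the case in which $\lambda$ lies in the continuous spectrum and is not an eigenvalue: a single fixed vector cannot carry an infinite resonance, which is why I localize the forcing to a finite window whose length is chosen large only relative to the fixed quantity $4\epsilon/\delta$, absorbing the approximate-eigenvector error into the term $\eta\epsilon$. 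By contraposition, a normal invertible operator with the shadowing property must be hyperbolic, completing the equivalence.
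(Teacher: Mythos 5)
Your proposal is correct, but for the ``only if'' direction it takes a genuinely different route from the one the paper relies on. The paper states this corollary without its own proof, presenting it as a consequence of Theorem~\ref{spr}; in fact only the ``if'' direction follows from Theorem~\ref{spr} (via Corollary~\ref{corsh}, or via your spectral-projection decomposition $M=E(\mathbb{D})H$, $N=E(\mathbb{C}\setminus\overline{\mathbb{D}})H$, which is fine), while the ``only if'' direction is imported from \cite{ES}, whose method the paper itself reproduces in the proof of \ref{S2}: from shadowing and a unimodular approximate eigenvalue one manufactures a \emph{true orbit of linear growth}, $\frac{n\delta}{3}-1<\|T^{n}f\|<n\delta+1$, and then the spectral measure $\mu$ of $T^{*}T$ associated to $f$ gives a dichotomy (either $\mu\bigl((1,\infty)\bigr)>0$ and orbits grow exponentially, or $\mu\bigl((1,\infty)\bigr)=0$ and $\|T^{n}f\|\le\|f\|$) that is incompatible with linear growth. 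You short-circuit this: the same resonant forcing, but confined to a finite window of length $N$, and instead of producing a shadowed orbit and analyzing its growth rate, you project the shadowing error onto the approximate eigenvector and extract the contradiction $N(\delta-\eta\epsilon)\le 2\epsilon$ from a scalar recursion; your quantifier bookkeeping (choosing $\eta$ and $N$ after $\epsilon$ and $\delta$) is exactly the delicate point and you handle it correctly. Your route buys several things: it is more elementary (no spectral measure, only $\sigma(T)=\sigma_{A}(T)$ and $\|(T-\lambda I)v\|=\|(T^{*}-\bar\lambda I)v\|$ for normal $T$); it shows shadowing fails for \emph{every} $\epsilon$, not just one; since the only inequality used is $\|(T^{*}-\bar\lambda I)v\|\le\|(T-\lambda I)v\|$, it extends verbatim to hyponormal operators admitting a unimodular approximate eigenvalue; and because the window sits in $n\ge 0$, it applies unchanged to positive shadowing of non-invertible normal operators. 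What the paper's (i.e., \cite{ES}'s) route buys is the linear-growth lemma itself, an independently useful tool which this paper reuses directly in Section~\ref{SH2}.
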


Finally, Let us also recall that an operator $T$ on a Banach space $X$ is Li-Yorke chaotic if it has an uncountable \textit{scrambled set} $S$, i.e., for all $x, y \in S$ with $x\neq y$, we have that $$\displaystyle\liminf_{n\to \infty}\|T^{n}x-T^{n}y\|=0\ and 
 \ \displaystyle\limsup_{n\to \infty}\|T^{n}x-T^{n}y\|=\infty.$$

The following result is well-known with respect to the recent advances of linear dynamics.

\begin{theorem}[{\cite[Theorem~9]{BBMP2}}]\label{mainLY}
Let $T$ be an operator on $X$. The following assertions are equivalent: 
\begin{enumerate}
    \item $T$ is Li-Yorke chaotic; 
    \item $T$ admits a {\rm{semi-irregular vector}}, that is, a vector $y\in X$ such that $$ \displaystyle\liminf_{n\to \infty}\|T^{n}y\|=0\ and \ \displaystyle\limsup_{n\to \infty}\|T^{n}y\|>0;$$
    \item $T$ admits an {\rm{irregular vector}}, that is, a vector $z\in X$ such that $$\displaystyle\liminf_{n\to \infty}\|T^{n}z\|=0\ and 
 \ \displaystyle\limsup_{n\to \infty}\|T^{n}z\|=\infty.$$
\end{enumerate}
\end{theorem}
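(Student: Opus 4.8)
The plan is to prove the cycle of implications while isolating the single step that carries real content. The implications $(1)\Leftrightarrow(2)$ and $(3)\Rightarrow(2)$ are formal consequences of linearity, so I would dispose of them quickly and then concentrate all the work on upgrading a semi-irregular vector to an irregular one, i.e. on $(2)\Rightarrow(3)$.

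For $(3)\Rightarrow(2)$ there is nothing to do: an irregular vector has $\limsup_n\|T^nz\|=\infty>0$, so it is a fortiori semi-irregular. For $(1)\Rightarrow(2)$, I would pick two distinct points $u,v$ in a scrambled set $S$; since $\|T^nu-T^nv\|=\|T^n(u-v)\|$, the Li--Yorke pair condition says precisely that $y:=u-v$ is semi-irregular. Conversely, for $(2)\Rightarrow(1)$, given a semi-irregular $y$ (so $y\neq0$ because $\limsup_n\|T^ny\|>0$), the uncountable set $S=\{\lambda y:\lambda\in(0,1]\}$ is scrambled: for $\lambda\neq\mu$ one has $\|T^n(\lambda y)-T^n(\mu y)\|=|\lambda-\mu|\,\|T^ny\|$, whose $\liminf$ is $0$ and whose $\limsup$ equals $|\lambda-\mu|\limsup_n\|T^ny\|>0$.

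For $(2)\Rightarrow(3)$ the key preliminary is that a semi-irregular vector forces $T$ to fail power-boundedness. Indeed, if $\sup_n\|T^n\|=K<\infty$ and $\|T^{n_k}y\|\to0$ along the return times $n_k\to\infty$, then for every large $m$ I may take the largest return time $n_k\le m$ and write $T^my=T^{m-n_k}(T^{n_k}y)$, so that $\|T^my\|\le K\|T^{n_k}y\|\to0$; this yields $\limsup_n\|T^ny\|=0$, a contradiction. Hence $\sup_n\|T^n\|=\infty$. Now there is a dichotomy: if the orbit of $y$ is unbounded then $\limsup_n\|T^ny\|=\infty$ and $y$ is already irregular; otherwise the orbit is bounded, and I would build the irregular vector as a gliding-hump series $z=\sum_j\lambda_jw_j$, where the $w_j$ are unit vectors with $\|T^{m_j}w_j\|$ large (realizing $\sup_n\|T^n\|=\infty$), and the indices $m_j$ and coefficients $\lambda_j$ are chosen inductively so that at a sequence of expansion times the norm $\|T^nz\|$ exceeds any prescribed bound, while along a sequence of return times it stays arbitrarily small.

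The hard part is exactly this last construction. The vectors $w_j$ witnessing $\sup_n\|T^n\|=\infty$ are controlled only at their own expansion times $m_j$, so I must arrange \emph{simultaneous two-sided control} of $\|T^nz\|$ for every $n$: forcing $\limsup_n\|T^nz\|=\infty$ without destroying $\liminf_n\|T^nz\|=0$, and keeping the series tail negligible at each relevant time. This demands careful inductive bookkeeping—coefficients $\lambda_j$ decaying fast, expansion indices $m_j$ growing fast, and the two families of times interleaved with the return times of $y$ (the only a priori source of smallness, since the expanding directions need not return near $0$). Essentially all the difficulty of the theorem resides here; once the gliding-hump vector is produced, $(3)$ holds and the remaining equivalences are the formal consequences described above.
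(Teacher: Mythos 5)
You should first note that the paper itself offers no proof of this statement; it is quoted verbatim from the reference \cite{BBMP2}, so your proposal can only be measured against the argument in that reference. Your formal implications are all correct and standard: $(3)\Rightarrow(2)$ is immediate; $(1)\Rightarrow(2)$ follows by taking $y=u-v$ for two distinct points of a scrambled set, since $\|T^nu-T^nv\|=\|T^n(u-v)\|$; and $(2)\Rightarrow(1)$ works with the uncountable scrambled set $\{\lambda y:\lambda\in(0,1]\}$. Your reduction of $(2)\Rightarrow(3)$ is also sound as far as it goes: the proof that a semi-irregular vector rules out power-boundedness is correct, and the dichotomy (unbounded orbit $\Rightarrow$ $y$ itself is irregular) is the right first move.

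The genuine gap is the step you yourself flag as ``the hard part'': in the bounded-orbit case you never produce the irregular vector, and the gliding-hump scheme as you set it up cannot be completed. Your humps $w_j$ are arbitrary unit vectors witnessing $\|T^{m_j}\|\to\infty$; their orbits at times other than $m_j$ are completely uncontrolled, so nothing can force $\liminf_n\|T^nz\|=0$: the required smallness would have to hold simultaneously for every term of the series along a common infinite sequence of times, and no decay of the coefficients $\lambda_j$ or growth of the indices $m_j$ supplies that, exactly because (as you observe) the expanding directions need not ever return near $0$. The missing idea is to witness the norm blow-up \emph{inside the cyclic subspace} $Y:=\overline{\operatorname{span}}\{T^ny:n\ge 0\}$: writing $\|T^{n_k}y\|\to 0$ for the return times and $\|T^{m_k}y\|\ge\beta>0$ for the expansion times, the unit vectors $x_k:=T^{n_k}y/\|T^{n_k}y\|\in Y$ satisfy $\|T^{m_k-n_k}x_k\|\ge\beta/\|T^{n_k}y\|\to\infty$, so $T|_Y$ (not merely $T$ on $X$, which is what you proved) fails to be power bounded. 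Once the blow-up lives on $Y$, no inductive bookkeeping is needed: by the uniform boundedness principle the set $\{x\in Y:\sup_n\|T^nx\|=\infty\}$ is residual in $Y$, while $\{x\in Y:\liminf_n\|T^nx\|=0\}$ is a $G_\delta$ in $Y$ containing the dense subspace $\operatorname{span}\{T^ky:k\ge 0\}$ (since $\|T^{n_j}(T^ky)\|\le\|T^k\|\,\|T^{n_j}y\|\to 0$), hence also residual. Any vector in the intersection of these two residual sets is irregular. This Baire-category argument is essentially how the cited reference proves the implication; your outline correctly locates the difficulty but does not overcome it.
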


An important consequence of the previous result is the following corollary.

\begin{corollary}[{\cite[Corollary~6]{BBMAP}}]\label{mainCor}
Let $T$ be a Li-Yorke chaotic operator. The following assertions hold:
\begin{enumerate}
    \item $\sigma(T)\cap \mathbb{T}\neq \emptyset$.
    \item $T$ is not normal.
    \item $T$ is not compact.
\end{enumerate}
\end{corollary}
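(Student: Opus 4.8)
The three assertions are all established by contraposition through the equivalence in \autoref{mainLY}: it suffices to show that under each hypothesis (namely $T$ hyperbolic, $T$ normal, or $T$ compact) the operator admits \emph{no} semi-irregular vector, so that $T$ cannot be Li-Yorke chaotic.

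For (1), the plan is to assume $\sigma(T)\cap\mathbb{T}=\emptyset$ and split the spectrum by the unit circle, $\sigma(T)=\sigma_-\sqcup\sigma_+$ with $\sigma_-\subset\mathbb{D}$ and $\sigma_+\subset\mathbb{C}\setminus\overline{\mathbb{D}}$. The Riesz functional calculus yields a $T$-invariant decomposition $X=M\oplus N$ with $\sigma(T|_M)=\sigma_-$ and $\sigma(T|_N)=\sigma_+$. Then $r(T|_M)<1$ gives $\|T^n|_M\|\to 0$, while $T|_N$ is invertible with $r((T|_N)^{-1})<1$, so that $\|T^n x\|\ge \|x\|/\|(T|_N)^{-n}\|\to\infty$ for every nonzero $x\in N$. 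Writing an arbitrary $y=y_M+y_N$, if $y_N\ne 0$ then $\|T^n y\|\to\infty$ (so $\liminf>0$), whereas if $y_N=0$ then $\|T^n y\|\to 0$ (so $\limsup=0$); in neither case is $y$ semi-irregular.

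For (2), working on a Hilbert space, I would pass to the positive self-adjoint operator $S=T^{*}T$, for which normality of $T$ gives $(T^n)^{*}T^n=S^n$ and hence $\|T^n x\|^2=\langle S^n x,x\rangle=\int_{\sigma(S)} t^n\,d\nu_x(t)$, where $\nu_x$ is the spectral measure associated to $S$ and $x$ (as recalled in the Preliminaries), supported in $[0,\infty)$. Splitting the integral over $[0,1)$, $\{1\}$ and $(1,\infty)$, the first piece tends to $0$ by dominated convergence, the second is the constant $\nu_x(\{1\})$, and the third is nondecreasing and tends to $+\infty$ as soon as $\nu_x((1,\infty))>0$. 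A short case analysis then shows $\liminf_n\|T^n x\|>0$ unless $\nu_x$ is concentrated on $[0,1)$, in which case $\limsup_n\|T^n x\|=0$; either way $x$ fails to be semi-irregular.

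For (3), I would assume $T$ compact. Since the nonzero eigenvalues of a compact operator can accumulate only at $0$, only finitely many eigenvalues satisfy $|\lambda|\ge 1$, and each has a finite-dimensional generalized eigenspace; the Riesz decomposition then gives $X=E\oplus F$ with $E$ finite-dimensional, $\sigma(T|_E)\subset\{|\lambda|\ge 1\}$ and $\sigma(T|_F)\subset\{|\lambda|<1\}$, whence $\|T^n|_F\|\to 0$. The crux is the finite-dimensional estimate that $\liminf_n\|T^n y_E\|>0$ for every $0\ne y_E\in E$, and I expect this to be the main obstacle: on the unit circle the naive bound $\|T^n y_E\|\ge \|y_E\|/\|(T|_E)^{-n}\|$ is useless, since the inverse can grow polynomially through Jordan blocks. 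Instead I would use the Jordan form $T|_{E_\lambda}=\lambda I+N$ and isolate the dominant term $\binom{n}{p}\lambda^{n-p}N^{p}y_\lambda$, with $p$ maximal such that $N^{p}y_\lambda\ne 0$; because $|\lambda|\ge 1$ this term has modulus bounded below and eventually dominates the others, forcing $\liminf_n\|T^n y_\lambda\|>0$ and hence $\liminf_n\|T^n y_E\|>0$. Combining this with $\|T^n y_F\|\to 0$ exactly as in part (1) rules out semi-irregular vectors, completing the argument.
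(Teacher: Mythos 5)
Your proposal is correct, but there is nothing in the paper to compare it against: Corollary~\ref{mainCor} is imported verbatim from \cite{BBMP2} and used as a black box (its only role is in Section~\ref{LCH2}, to exclude Li--Yorke chaos for the normal symbols and for the case $\|C_\phi\|<1$). What you have done is reconstruct a proof from Theorem~\ref{mainLY}, and your reconstruction is sound. In (1), the Riesz decomposition along $\mathbb{T}$ works exactly as you say: boundedness of the spectral projections gives $\liminf_n\|T^ny\|>0$ when $y_N\neq 0$ and $\limsup_n\|T^ny\|=0$ when $y_N=0$, so no vector is semi-irregular. In (2), the identity $(T^n)^{*}T^n=(T^{*}T)^n$ is valid for normal $T$, and your three-way split of $\int_{\sigma(S)} t^n\,d\nu_x$ over $[0,1)$, $\{1\}$, $(1,\infty)$ is exhaustive; it is worth noting that this is precisely the spectral-measure device the authors themselves deploy later, in the proof of assertion~\ref{S2}, to refute positive shadowing for parabolic non-automorphisms, so your argument is stylistically consonant with the paper. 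Part (3) is where a blind attempt typically fails, and you correctly identified the trap: unimodular eigenvalues of a compact operator can carry nontrivial Jordan blocks, so $\|(T|_E)^{-n}\|$ may grow polynomially and the naive bound $\|T^ny_E\|\geq\|y_E\|/\|(T|_E)^{-n}\|$ proves nothing. Your repair is valid: with $p$ maximal such that $N^{p}y_\lambda\neq 0$, the term $\binom{n}{p}\lambda^{n-p}N^{p}y_\lambda$ dominates because $\binom{n}{k}/\binom{n}{p}\to 0$ for $k<p$ while $|\lambda|^{p-k}$ is constant, giving $\|T^ny_\lambda\|\geq\tfrac12\binom{n}{p}|\lambda|^{n-p}\|N^{p}y_\lambda\|$ eventually, hence $\liminf_n\|T^ny_\lambda\|>0$; the only presentational gap is the passage from a single generalized eigenspace to all of $E$, which needs the (automatic) boundedness of the finitely many spectral projections inside the finite-dimensional space $E$ before you can conclude $\liminf_n\|T^ny_E\|>0$. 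With that one sentence added, the argument is complete.
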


\subsection{Hardy space \texorpdfstring{$H^2(\mathbb{C}_{+})$}{lg}}
Let $\mathbb{C}_{+}$ be the open right half-plane. The Hardy space  $H^{2}(\mathbb{C}_{+})$ is the Hilbert space of analytic functions on $\mathbb{C}_{+}$ for which the norm $$\|f\|_2^2=\displaystyle\sup_{0<x<\infty}\frac{1}{\pi}\displaystyle\int_{-\infty}^{\infty}|f(x+iy)|^2dy$$ is finite. For each $\beta\in \mathbb{C}_{+}$, let $k_{\beta}$ denote the \textit{reproducing kernel} for $H^2(\mathbb{C}_{+})$ at $\beta$, that is, $$k_{\beta}(w)=\frac{1}{w+\overline{\beta}}.$$

These kernels satisfy the fundamental relation $\langle f, k_{\beta} \rangle =f(\beta)$ for all $f\in H^2(\mathbb{C}_{+})$.
If $\phi:\mathbb{C}_{+}\to \mathbb{C}_{+}$ is a holomorphic map and $C_{\phi}$ is the composition operator with symbol $\phi$, then $C^{\ast}_{\phi}k_{\beta}=k_{\phi(\beta)},$ for every $\beta \in \mathbb{C}_{+}.$
\vspace{0.2cm}

Elliott and Jury \cite[Theorem~3.1]{EJ} proved that a holomorphic map $\phi:\mathbb{C}_+ \to \mathbb{C}_+$ induces a continuous operator $C_{\phi}$ on $H^{2}(\mathbb{C}_+)$ if and only if $\phi(\infty)=\infty$ and if the non-tangential limit \begin{equation*}\label{Eq1}
    \phi^{'}(\infty):=\displaystyle\lim_{w\to \infty}\frac{w}{\phi(w)}
\end{equation*}
exists and is finite, in which case, $\|C_{\phi}\| = \sqrt{\phi^{'}(\infty)}$. Matache \cite{M2} proved that the only linear fractional self-maps of $\mathbb{C}_+$ that induce continuous composition operators on $H^2(\mathbb{C}_+)$ are the \textit{affine maps} 
\begin{equation}\label{am}
\phi(w)=aw+b,
\end{equation}
where $a>0$ and $Re(b)\geq 0.$ Such a map $\phi$ is said to be of \textit{parabolic-type} if $a = 1$ and is a \textit{parabolic automorphism} if additionally $Re(b) = 0$. Similarly, $\phi$ is of hyperbolic-type if $a \neq 1$ and is a \textit{hyperbolic automorphism} if additionally $Re(b) = 0$. A hyperbolic non-automorphisms are precisely the symbols $\phi(w) = aw + b$ with $a \in (0, 1)\cup (1,\infty)$ and $Re(b)>0$.

The next result shows that if $\phi$ is as in (\refeq{am}), then the normal composition operators $C_{\phi}$ on $H^{2}(\mathbb{C}_+)$ are completely characterized.

\begin{theorem}[{\cite[Theorem~2]{NS}}]\label{normal}
Let $\phi(w) = aw + b$ with $a > 0$ and $Re(b)\geq 0$. Then \begin{enumerate}
\item $C_{\phi}$ is normal if and only if $a = 1$ or $Re(b) = 0$,
\item $C_{\phi}$ is self-adjoint if and only if $a = 1$ and $b\geq 0$,
\item $C_{\phi}$ is unitary if and only if $a = 1$ and $Re(b) = 0.$ 
\end{enumerate}
\end{theorem}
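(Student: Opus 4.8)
The plan is to test each property against the reproducing kernels $k_\beta$, $\beta\in\mathbb{C}_+$, whose linear span is dense in $H^2(\mathbb{C}_+)$ (indeed, if $f\perp k_\beta$ for every $\beta$, then $f(\beta)=\langle f,k_\beta\rangle=0$, so $f=0$). Since all the operators in play are bounded, two of them coincide as soon as the associated sesquilinear forms agree on all pairs $(k_\beta,k_{\beta'})$; this turns each of the three statements into an elementary identity between rational functions of $\beta$ and $\beta'$. The two computational inputs are the given adjoint formula $C_\phi^* k_\beta = k_{\phi(\beta)} = k_{a\beta+b}$ and the action of $C_\phi$ itself on a kernel, which I would obtain by direct substitution:
$$C_\phi k_\beta(w)=k_\beta(aw+b)=\frac{1}{aw+b+\overline{\beta}}=\frac{1}{a}\,k_\gamma(w),\qquad \gamma=\frac{\beta+\overline{b}}{a},$$
noting that $Re(\gamma)=(Re(\beta)+Re(b))/a>0$, so $\gamma\in\mathbb{C}_+$. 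I would also record the kernel pairing $\langle k_\beta,k_{\beta'}\rangle=k_\beta(\beta')=1/(\beta'+\overline{\beta})$.

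For the normality statement (1), I would compute the two relevant forms via $\langle C_\phi^* C_\phi k_\beta,k_{\beta'}\rangle=\langle C_\phi k_\beta,C_\phi k_{\beta'}\rangle$ and $\langle C_\phi C_\phi^* k_\beta,k_{\beta'}\rangle=\langle C_\phi^* k_\beta,C_\phi^* k_{\beta'}\rangle$. The first evaluates to $1/\big(a(\beta'+\overline{\beta})+2aRe(b)\big)$ and the second to $1/\big(a(\beta'+\overline{\beta})+2Re(b)\big)$. Equality for all $\beta,\beta'$ is therefore equivalent to $aRe(b)=Re(b)$, i.e.\ $(a-1)Re(b)=0$, which says precisely $a=1$ or $Re(b)=0$. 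Because the kernels span a dense subspace, this equivalence upgrades from the forms to the operator identity $C_\phi^* C_\phi=C_\phi C_\phi^*$, yielding both directions at once.

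For self-adjointness (2) I would instead compare $\langle C_\phi k_\beta,k_{\beta'}\rangle=1/(a\beta'+\overline{\beta}+b)$ with $\langle C_\phi^* k_\beta,k_{\beta'}\rangle=1/(\beta'+a\overline{\beta}+\overline{b})$; matching the coefficients of $\beta'$, of $\overline{\beta}$, and the constant term forces $a=1$ and $b=\overline{b}$, i.e.\ $a=1$ and (since $Re(b)\ge 0$) $b\ge 0$. For unitarity (3) I would test $C_\phi^* C_\phi=I$ by comparing the form $1/\big(a(\beta'+\overline{\beta})+2Re(b)\big)$ from step (1) with $\langle k_\beta,k_{\beta'}\rangle=1/(\beta'+\overline{\beta})$, which gives $a=1$ and $Re(b)=0$. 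Conversely, when $a=1$ and $Re(b)=0$ the map $\phi(w)=w+b$ is an automorphism of $\mathbb{C}_+$, so $C_\phi$ is invertible, and the computation shows $C_\phi^* C_\phi=I$; an invertible isometry (equivalently, by (1), a normal isometry) is unitary.

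The routine part is the algebra; the point that needs care is the reduction itself. I expect the main obstacle to be justifying rigorously that agreement of the sesquilinear forms on the kernel family $\{k_\beta\}$ is equivalent to the operator identities. This rests on the density of their span together with the boundedness of $C_\phi$ (guaranteed by the Elliott--Jury criterion), and it is exactly what makes the kernel computations yield genuine \emph{if and only if} statements rather than merely necessary conditions. A secondary point is the unitary case, where one must supplement the isometry identity $C_\phi^* C_\phi=I$ with the invertibility of $C_\phi$ coming from $\phi$ being an automorphism of $\mathbb{C}_+$ (or, equivalently, invoke normality from part (1) to pass from a normal isometry to a unitary).
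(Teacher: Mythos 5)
Your proposal is correct, but there is nothing in the paper to compare it against: Theorem \ref{normal} is stated as a quoted result from \cite{NS} and the paper gives no proof of it. Your reproducing-kernel argument is a sound, self-contained derivation. The computations check out: $C_\phi k_\beta = \tfrac{1}{a}k_{(\beta+\overline{b})/a}$, $C_\phi^* k_\beta = k_{a\beta+b}$, and the resulting forms $\langle C_\phi k_\beta, C_\phi k_{\beta'}\rangle = 1/\bigl(a(\beta'+\overline{\beta})+2aRe(b)\bigr)$ and $\langle C_\phi^* k_\beta, C_\phi^* k_{\beta'}\rangle = 1/\bigl(a(\beta'+\overline{\beta})+2Re(b)\bigr)$ are right, as is the density argument upgrading agreement of forms on kernels to operator identities. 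Note that your identity $C_\phi k_\beta = \tfrac{1}{a}k_{\varphi(\beta)}$ with $\varphi(w)=(w+\overline{b})/a$ is exactly equivalent to the adjoint formula $C_\phi^* = a^{-1}C_\varphi$ of \cite[Proposition~1]{NS}, which this paper invokes elsewhere (e.g.\ in Proposition \ref{p3}); so in effect you have re-derived the main tool of the cited source and then run the normality/self-adjointness/unitarity tests with it, which is presumably close in spirit to the original proof. One small slip: in part (3) you cite the form $1/\bigl(a(\beta'+\overline{\beta})+2Re(b)\bigr)$ as that of $C_\phi^* C_\phi$, but by your own labeling in part (1) that expression belongs to $C_\phi C_\phi^*$; the form of $C_\phi^* C_\phi$ is $1/\bigl(a(\beta'+\overline{\beta})+2aRe(b)\bigr)$. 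This is immaterial, since setting either expression equal to $1/(\beta'+\overline{\beta})$ forces $a=1$ and $Re(b)=0$, and conversely both identities hold under those conditions, so $C_\phi$ is unitary even without the detour through invertibility of the automorphism.
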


For simplicity, we call as \textit{hyperbolic of type I} to every  map $\phi(w)=aw+b$ with $a\in (0,1)$ and $Re(b)\geq 0.$  Similarly, we call as \textit{hyperbolic of type II} to every  map $\phi(w)=aw+b$ with $a\in (1,\infty)$ and $Re(b)\geq 0.$ For each $n\in \mathbb{N}$, let $\phi^{[n]}$ denote the $n$-th iterate of the symbol $\phi(w) = aw + b$, then a simple computation gives
 \begin{equation}\label{eq3}
 \phi^{[n]}(w)=\left\{\begin{array}{lcc} w+nb, &  & a=1, \\
 a^{n}w+\displaystyle\frac{(1-a^n)}{1-a}b, &  & a\neq1.
 \end{array}
 \right.
 \end{equation}

We finish this section recalling known results about the spectrum of the operator $C_{\phi}$ on $H^2(\mathbb{C}_+)$ (see Theorems 1.1 and 1.2 of \cite{S}).

\begin{theorem}[\cite{S}]\label{sp1}
Let $\phi(w)=w+b$ be a parabolic self-map of $\mathbb{C}_+$ with $Re(b)\geq 0$. 
Then the spectrum of $C_{\phi}$ acting on $H^2(\mathbb{C}_+)$ is
\begin{enumerate}
    \item $\sigma(C_{\phi})= \mathbb{T}$, when $Re(b)=0$,
    \item $\sigma(C_{\phi} )= \{e^{bt} : t\in [0, \infty)\}\cup \{0\},$ when $b\in \mathbb{C}_+.$ 
\end{enumerate}
\end{theorem}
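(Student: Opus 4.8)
The plan is to move the computation to a multiplication operator by means of the Paley--Wiener representation of $H^2(\mathbb{C}_+)$. Recall that the Laplace transform $\mathcal{L}$, defined by $(\mathcal{L}F)(w)=\int_0^\infty F(t)e^{-wt}\,dt$, is, up to a positive constant, a unitary isomorphism from $L^2(0,\infty)$ onto $H^2(\mathbb{C}_+)$; under it the reproducing kernel $k_\beta$ corresponds to the exponential $t\mapsto e^{-\overline{\beta}\,t}$, which makes the identification easy to verify. Since conjugation by such an isomorphism preserves the spectrum, I would first record this equivalence and reduce the problem to analysing the pulled-back operator $\mathcal{L}^{-1}C_\phi\mathcal{L}$ on $L^2(0,\infty)$.

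Next I would identify this operator explicitly. For $\phi(w)=w+b$ a direct substitution gives $(C_\phi\mathcal{L}F)(w)=(\mathcal{L}F)(w+b)=\int_0^\infty \big(e^{-bt}F(t)\big)e^{-wt}\,dt$, so $\mathcal{L}^{-1}C_\phi\mathcal{L}$ is precisely the multiplication operator $M_{m_b}$ with symbol $m_b(t)=e^{-bt}$ acting on $L^2(0,\infty)$. One can cross-check this using the adjoint relation $C_\phi^{\ast}k_\beta=k_{\beta+b}$ together with the kernel-to-exponential correspondence above. Having done so, the spectrum of $C_\phi$ equals the spectrum of $M_{m_b}$, which by the standard theory of multiplication operators is the essential range of $m_b$; as $m_b$ is continuous and $(0,\infty)$ carries Lebesgue measure, this essential range is simply the closure of the range $\{e^{-bt}:t>0\}$.

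It then remains to describe this closure in the two regimes. When $Re(b)=0$ with $b\neq0$, one has $m_b(t)=e^{-i\,Im(b)\,t}$, whose values fill a dense subset of the unit circle, so the closure is $\mathbb{T}$. When $Re(b)>0$, the orbit $\{e^{-bt}:t\ge 0\}$ is a logarithmic spiral issuing from $1$ and converging to $0$ as $t\to\infty$, so its closure is the spiral together with the limit point $0$, giving $\{e^{-bt}:t\ge0\}\cup\{0\}$. The main obstacle I anticipate is setting up the unitary equivalence with the correct normalization and then handling the two boundary subtleties of the essential range: that in the automorphism case the orbit is genuinely \emph{dense} in $\mathbb{T}$ (which is where the hypothesis $b\neq0$ is used), and that in the non-automorphism case the value $0$ must be adjoined since $t\mapsto e^{-bt}$ accumulates at $0$ without attaining it. The remaining verifications are routine facts about multiplication operators.
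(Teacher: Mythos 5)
Your proposal is correct, but there is nothing internal to compare it against: the paper does not prove this statement, it quotes it from \cite{S} (it is stated there as a known result, Theorems 1.1--1.2 of that reference). Your argument is a genuine, complete proof, and it is the standard one for parabolic symbols: the Paley--Wiener theorem gives that $\mathcal{L}:L^2(0,\infty)\to H^2(\mathbb{C}_+)$ is a constant multiple of a unitary (the constant is immaterial, since it cancels under conjugation), the intertwining computation $C_\phi\mathcal{L}=\mathcal{L}M_{m_b}$ with $m_b(t)=e^{-bt}$ is a one-line substitution and is legitimate because $|e^{-bt}|=e^{-t\,Re(b)}\leq 1$ for $Re(b)\geq 0$, and the spectrum of a multiplication operator is its essential range, which for a continuous symbol on $(0,\infty)$ with Lebesgue measure is the closure of the range. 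Your case analysis is also right, including the observation that $b\neq 0$ is needed in case (1): for $b=i\beta$ with $\beta\neq 0$ the range of $t\mapsto e^{-i\beta t}$ on $(0,\infty)$ is in fact all of $\mathbb{T}$ (not merely dense), while $b=0$ gives the identity, whose spectrum is $\{1\}$.

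Two further remarks. First, your derivation yields $\sigma(C_\phi)=\{e^{-bt}:t\geq 0\}\cup\{0\}$ when $Re(b)>0$, whereas the statement in the paper reads $\{e^{bt}:t\in[0,\infty)\}\cup\{0\}$; the latter set is unbounded (since $|e^{bt}|=e^{t\,Re(b)}\to\infty$), which is impossible for the spectrum of a bounded operator, so this is a sign typo in the paper and your version is the correct one (and the one in \cite{S}). Second, as a by-product your unitary equivalence with a multiplication operator re-proves that parabolic symbols induce normal composition operators (unitary when $Re(b)=0$), consistent with Theorem~\ref{normal}, and gives $\|C_\phi\|=\sup_{t>0}|e^{-bt}|=1=\sqrt{\phi'(\infty)}$, consistent with the Elliott--Jury norm formula quoted in the paper.
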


\begin{theorem}[\cite{S}]\label{sp2}
Let $\phi(w)=aw+b$ be a hyperbolic self-map of $\mathbb{C}_+$ with $Re(b)\geq 0$ and $b\neq 0$. Then the spectrum of $C_{\phi}$ acting on $H^{2}(\mathbb{C}_+)$ is
\begin{enumerate}
    \item $\sigma(C_{\phi})= \{\lambda \in \mathbb{C}: |\lambda| = a^{-1/2}\},$ when $Re(b)=0$,
  \item $\sigma(C_{\phi})= \{\lambda\in \mathbb{C}: |\lambda|\leq a^{-1/2} \}$, when $b\in \mathbb{C}_+$.
\end{enumerate}

\end{theorem}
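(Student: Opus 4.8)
The plan is to transfer $C_{\phi}$ to a concrete weighted translation operator on $L^{2}(\mathbb{R})$ and read off its spectrum there. First I would invoke the Paley--Wiener theorem: the Laplace transform $(\mathcal{L}g)(w)=\int_{0}^{\infty}g(t)e^{-wt}\,dt$ is, up to the scalar normalization fixed in the excerpt, a unitary from $L^{2}(0,\infty)$ onto $H^{2}(\mathbb{C}_{+})$. A direct change of variables shows that $\mathcal{L}^{-1}C_{\phi}\mathcal{L}$ is the weighted dilation $g(s)\mapsto a^{-1}e^{-bs/a}g(s/a)$ on $L^{2}(0,\infty)$ (the scalar normalization of $\mathcal{L}$ cancels in the conjugation, so it is irrelevant). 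Applying next the logarithmic substitution $s=e^{u}$, realized by the unitary $(Vg)(u)=e^{u/2}g(e^{u})$ from $L^{2}(0,\infty)$ onto $L^{2}(\mathbb{R})$, turns this into the weighted translation
\[ (\widetilde{W}h)(u)=a^{-1/2}\,\omega(u)\,h(u-\delta),\qquad \omega(u)=e^{-(b/a)e^{u}},\quad \delta=\log a, \]
so that $\sigma(C_{\phi})=\sigma(\widetilde{W})$. Independently, since $C_{\phi}^{n}=C_{\phi_{n}}$ with $\phi_{n}(w)=a^{n}w+b\tfrac{a^{n}-1}{a-1}$ and $\phi_{n}'(\infty)=a^{-n}$, the Elliott--Jury norm formula $\|C_{\psi}\|=\sqrt{\psi'(\infty)}$ gives $\|C_{\phi}^{n}\|=a^{-n/2}$ exactly; hence the spectral radius equals $a^{-1/2}$ and $\sigma(C_{\phi})\subseteq\{\lambda:|\lambda|\le a^{-1/2}\}$ in both cases. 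The remaining task is to identify the spectrum inside this disc.

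In the automorphism case $Re(b)=0$ the weight is unimodular, $|\omega(u)|\equiv1$, so $\widetilde{W}=a^{-1/2}M_{\omega}T_{\delta}$, where $M_{\omega}$ is a unitary multiplier and $T_{\delta}\colon h(u)\mapsto h(u-\delta)$ is the unitary translation. I would remove the multiplier by conjugating with the unimodular multiplication operator $M_{\theta}$, where $\theta(u)=e^{-\frac{b}{a-1}e^{u}}$ (unimodular precisely because $b\in i\mathbb{R}$); a one-line check using $e^{-\delta}=1/a$ gives $\theta(u)/\theta(u-\delta)=\omega(u)$, whence $M_{\theta}^{-1}\widetilde{W}M_{\theta}=a^{-1/2}T_{\delta}$. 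Since $T_{\delta}$ is Fourier-equivalent to multiplication by $e^{-i\delta\xi}$, whose essential range is all of $\mathbb{T}$ because $\delta=\log a\neq0$, we obtain $\sigma(T_{\delta})=\mathbb{T}$ and therefore $\sigma(C_{\phi})=a^{-1/2}\mathbb{T}=\{\lambda:|\lambda|=a^{-1/2}\}$, which is assertion (1).

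In the non-automorphism case $Re(b)>0$ the weight is genuinely non-constant, and the point is to fill the punctured open disc with eigenvalues. Solving the first-order difference equation $\widetilde{W}h=\lambda h$, i.e.\ $h(u)=\tfrac{a^{-1/2}}{\lambda}\,\omega(u)\,h(u-\delta)$, by the exponential ansatz used above yields the explicit candidate
\[ h_{\lambda}(u)=\left(\frac{a^{-1/2}}{\lambda}\right)^{u/\delta}\exp\left(-\frac{b}{a-1}e^{u}\right), \]
so that
\[ |h_{\lambda}(u)|^{2}=\exp\left(\frac{2u}{\delta}\log\frac{a^{-1/2}}{|\lambda|}\right)\exp\left(-\frac{2Re(b)}{a-1}e^{u}\right). \]
For $a>1$ the double-exponential factor forces decay as $u\to+\infty$, while the power factor forces decay as $u\to-\infty$ exactly when $\log\tfrac{a^{-1/2}}{|\lambda|}>0$; thus $h_{\lambda}\in L^{2}(\mathbb{R})$ iff $0<|\lambda|<a^{-1/2}$, and every such $\lambda$ is an eigenvalue of $\widetilde{W}$.

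For $0<a<1$ the same $h_{\lambda}$ fails to be square-integrable (its $+\infty$ behaviour blows up), so I would instead run the identical computation for the adjoint $\widetilde{W}^{*}h(u)=a^{-1/2}\overline{\omega(u+\delta)}\,h(u+\delta)$; its explicit eigenfunctions, obtained the same way, lie in $L^{2}(\mathbb{R})$ exactly for $0<|\lambda|<a^{-1/2}$, and since $\sigma(\widetilde{W})=\overline{\sigma(\widetilde{W}^{*})}$ while the disc is invariant under conjugation, the punctured open disc again lies in $\sigma(C_{\phi})$. In both subtypes, taking the closure and combining with the spectral-radius bound yields $\sigma(C_{\phi})=\{\lambda:|\lambda|\le a^{-1/2}\}$, which is assertion (2). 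The main obstacle is precisely this non-automorphism case: because $\widetilde{W}$ is no longer a scalar multiple of a unitary, one must genuinely exhibit the interior of the disc in the spectrum, and the delicate point is the simultaneous control of the candidate (co)eigenfunctions at \emph{both} ends $u\to\pm\infty$ --- the integrability works out for $\widetilde{W}$ when $a>1$ but only for $\widetilde{W}^{*}$ when $a<1$, which is why passing to the adjoint in one subcase is essential.
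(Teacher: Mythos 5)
Your proof is correct, but there is no internal argument to compare it against: Theorem~\ref{sp2} is imported verbatim from Schroderus \cite{S} (Theorems~1.1 and 1.2 there) and is stated in this paper without proof, serving only as an ingredient for Section~\ref{SH2}. Judged on its own merits, your derivation checks out at every step. The conjugation chain is computed correctly: $\mathcal{L}^{-1}C_{\phi}\mathcal{L}$ is indeed $g\mapsto a^{-1}e^{-bs/a}g(s/a)$ on $L^{2}(0,\infty)$, and the logarithmic substitution turns it into the weighted translation $h\mapsto a^{-1/2}e^{-(b/a)e^{u}}h(u-\log a)$. The identity $C_{\phi}^{n}=C_{\phi^{[n]}}$ together with the Elliott--Jury formula \eqref{Eq1} (and the iterate formula, which is the paper's own equation \eqref{eq3}) gives $\|C_{\phi}^{n}\|=a^{-n/2}$ exactly, hence the spectral-radius bound. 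The cocycle identity $\theta(u)/\theta(u-\delta)=\omega(u)$ for $\theta(u)=e^{-\frac{b}{a-1}e^{u}}$ is verified by a direct computation using $e^{-\delta}=1/a$, and it settles case (1); in case (2) your eigenfunctions $h_{\lambda}$ (for $a>1$), respectively the adjoint eigenfunctions $(\lambda a^{1/2})^{u/\delta}e^{\frac{\overline{b}}{a-1}e^{u}}$ (for $a<1$), do lie in $L^{2}(\mathbb{R})$ precisely when $0<|\lambda|<a^{-1/2}$, so the punctured open disc consists of eigenvalues of $\widetilde{W}$ or of $\widetilde{W}^{*}$, and closedness of the spectrum plus the radius bound completes the argument. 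Two cosmetic points: fix a branch of the logarithm once and for all, so that $(a^{-1/2}/\lambda)^{u/\delta}:=\exp\bigl(\tfrac{u}{\delta}\operatorname{Log}(a^{-1/2}/\lambda)\bigr)$ is unambiguous; and note that your argument for case (1) nowhere uses $b\neq 0$ (that hypothesis is inherited from the statement in \cite{S}, where $b=0$ is handled separately). What your route buys, compared with simply citing the literature as the paper does, is a self-contained half-plane argument that in addition exhibits explicit eigenvectors filling the open disc in the non-automorphism case --- information finer than the bare spectral picture the paper needs.
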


\section{Expansivity property for  \texorpdfstring{$C_{\phi}$}{lg}}\label{EH2}
In this section, we fully characterize the different notions of expansivity for the family of all composition operators on $H^2(\mathbb{C}_{+})$ induced by a certain class of affine maps. In section 3.1, we begin by studying the positive uniform expansivity and the positive expansivity showing as a main result that they are equivalent including also the affine map type. In section 3.2, we study the case when the composition operator is invertible and show that uniform expansivity and uniform expansivity are equivalent properties. 

\subsection{Expansivity in a positive sense of \texorpdfstring{$C_{\phi}$}{lg}} 
To explore the (uniformly) positive expansivity of $C_{\phi}$ on $H^2(\mathbb{C}_{+})$, the main complication arises when the affine symbol is hyperbolic of type I since the resolvent of the operator $C_{\phi}$ does not intersect the unit disk and in this order the result in \cite[Theorem~D]{ES} is not applicable. Fortunately, using the density of kernels we can overcome this difficulty. 

\begin{theorem}\label{exp} Let $\phi(w)=aw+b$ be a self-map of $\mathbb{C}_{+}$. Then, the following statements are equivalent:
\begin{enumerate}
\item $C_{\phi}$ is positively expansive on $H^{2}(\mathbb{C}_{+})$;
\item $\phi$ is \textit{hyperbolic of type I};
\item $C_{\phi}$ is uniformly positively expansive on $H^{2}(\mathbb{C}_{+}).$
\end{enumerate}
\end{theorem}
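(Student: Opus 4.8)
The plan is to prove the three conditions equivalent by assembling the preceding propositions into a short cyclic chain of implications, $(2)\Rightarrow(3)\Rightarrow(1)\Rightarrow(2)$, so that no implication requires fresh analytic work beyond what is already established.

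First I would handle $(2)\Rightarrow(3)$, which is nothing but the statement of Proposition \ref{p2}: if $\phi$ is hyperbolic of type I, then $C_{\phi}$ is uniformly positively expansive on $H^2(\mathbb{C}_+)$. Next, $(3)\Rightarrow(1)$ is immediate from Definition \ref{d1}: uniform positive expansivity furnishes a single $n\in\mathbb{N}$ with $\|C_{\phi}^n z\|\geq 2$ for all $z\in S_{H^2(\mathbb{C}_+)}$, and this same $n$ then witnesses positive expansivity for each individual $z$. No estimate is needed here.

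The remaining link $(1)\Rightarrow(2)$ I would establish by contraposition through Proposition \ref{p3}. Since $\phi(w)=aw+b$ is an admissible affine symbol we always have $a>0$, so failing to be hyperbolic of type I forces $a\geq 1$. This leaves exactly two possibilities: either $a=1$, whence $\phi$ is of parabolic type or the identity, or $a>1$, whence $\phi$ is hyperbolic of type II. In both cases Proposition \ref{p3} yields that $C_{\phi}$ is not positively expansive, which is precisely the contrapositive of $(1)\Rightarrow(2)$. Closing the cycle gives all three equivalences; one may alternatively invoke Proposition \ref{p1} for a direct $(2)\Rightarrow(1)$, but it is not needed once the chain is arranged this way.

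Since all the substantive orbit estimates live inside Propositions \ref{p1}--\ref{p3}, there is no genuine obstacle at this stage. The only point demanding attention is the exhaustiveness of the case split once hyperbolicity of type I is excluded: one must note that the admissibility constraint $a>0$ together with $a\geq 1$ leaves no gap between the parabolic case $a=1$ and the hyperbolic type II case $a>1$, so Proposition \ref{p3} covers every remaining symbol and the contrapositive argument is complete.
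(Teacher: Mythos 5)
Your proposal is correct and follows essentially the same route as the paper, which proves the theorem simply by combining Propositions \ref{p1}, \ref{p2} and \ref{p3} (with $(1)\Rightarrow(2)$ and $(3)\Rightarrow(2)$ obtained by contraposition from Proposition \ref{p3}, exactly as you argue). The only cosmetic difference is that you arrange the implications in a cycle and replace the paper's use of Proposition \ref{p1} for $(2)\Rightarrow(1)$ by the trivial implication $(3)\Rightarrow(1)$ from Definition \ref{d1}, which makes Proposition \ref{p1} logically redundant for this theorem.
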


In order of studying the uniform expansiveness, some properties of the ball $S_{H^{2}(\mathbb{C}_+)}$ need to be studied. 

\begin{lemma}\label{l1} Let $\phi$ be hyperbolic of type $I.$ Then, for each non-zero $f\in H^2(\mathbb{C}_{+})$ it follows $f(\gamma)\neq0$ where $\gamma:=b/1-a$ is the fixed point of $\phi.$ 
\end{lemma}

\begin{proof}
    Suppose $f(\gamma)=0$ for some non-zero $f\in H^2(\mathbb{C}_{+}).$ Then, there exist $R_0>0$ such that $f(z)\neq0$ for $0<|z-\gamma|<R_0.$ By using \eqref{eq3}, $$\phi^{[n]}(z)-\gamma=a^{n}(z-\gamma), \ \ \textrm{for} \ \ z\in\mathbb{C}_{+}.$$ Hence, since $a\in(0,1)$ it follows $\phi^{[n]}(B_R)\subset B_R$ for any ball $B_R$ of radius $R$ in $\mathbb{C}_{+}.$ For each $k\in\mathbb{N}$ define $G_k:=\{z\in B_{R_0/2^k}\setminus \{\gamma\}: Re(z-\gamma)>0\},$ since $f\neq0$ on $G_k$ an application of the Minimal Modulus Principle implies that there exist $z_k\in\partial G_k$ such that $|f(z)|\geq |f(z_k)|$ for all $z\in G_k.$ In particular, $$|f(\phi^{[n]}(z))|\geq |f(z_k)|, \ \ \textrm{for} \ \ n=1,2,3,\cdots \ \ \textrm{and} \ \ z\in G_k.$$ 

    Since $f(\phi^{n}(z))\to f(\gamma)=0, \ \ \textrm{uniformly in } \  z\in\overline{G_k}$ it follows that $f(z_k)=0$ for each $k\in\mathbb{N}.$ However, since $z_k\in G_k$ 
    
    $$\displaystyle\sum_{k=1}^{+\infty} \frac{Re(z_k)}{1+|z_k|^{2}}\geq Re(\gamma)\sum_{k=1}^{+\infty}\frac{2^{2k}}{(2^k|\gamma|+R_0)^{2}}=\sum_{k=1}^{+\infty}\frac{1}{(|\gamma|+R_0/2^k)^{2}}=+\infty,$$
    thus the zero's sequence $\{z_k\}$ does not satisfies the zero conditions of \textit{Baschke} for $f\in H^{2}(\mathbb{C}_+)$ unless $f=0.$     
\end{proof}

\begin{proof} (Proof the Theorem \ref{exp})
Recall that $\phi$ is \textit{hyperbolic of type I}, when $a\in(0,1)$ and $Re(b)\geq 0.$
$(1)\Longrightarrow(2)$ 
    If $\|C^{n}_{\phi}\|\leq 1$ for all $n\in\mathbb{N}$, then 
$\sup\{\|C^{n}_{\phi}f\| : n\in \mathbb{N}\}$ is finite, however this contradicts $C_{\phi}$ to be positively expansive. Therefore, there exists $n_0$ such that $\sqrt{\frac{1}{a^{n_0}}}=\|C^{n_{0}}_{\phi}\|>1$ and hence $a<1.$  
$(2)\Longrightarrow (3)$ Let $\gamma:=b/1-a$ be fixed point of $\phi.$ It is not difficult see that $S_{H^2(\mathbb{C}_+)}$ is bounded locally in $H^2(\mathbb{C}_+),$ thus $S_{H^2(\mathbb{C}_+)}$ is equi-continuous in each $z\in\mathbb{C}_+$. Hence, since $\phi^n(z)\to \gamma$ as $n\to+\infty
$ punctually in $z\in \mathbb{C}_+$ this implies $f(\phi^n(z))\to f(\gamma)$ as $n\to+\infty
$  punctually in $z\in\mathbb{C}_+$ and uniformly in $f\in S_{H^2(\mathbb{C}_+)}.$ By Fatou Lemma, for each $x\in(0,+\infty)$ fixed 
\begin{eqnarray*}
\liminf_{n\to+\infty}
\|C_\phi^{n}f\|&\geq& \frac{1}{\pi} \liminf_{n\to+\infty}\int^{+\infty}_{-\infty} |f(\phi^{n}(x+iy))|^2 \ dy\\
&\geq& \frac{1}{\pi} \liminf_{n\to+\infty}\int_{-\frac{\pi}{f^2(\gamma)}}^{\frac{\pi}{f^2(\gamma)}} \ |f(\phi^{n}(x+iy))|^2 \ dy \\
&\geq& \frac{1}{\pi} \int_{-\frac{\pi}{f^2(\gamma)}}^{\frac{\pi}{f^2(\gamma)}} \ \liminf_{n\to+\infty}|f(\phi^{n}(x+iy))|^2 \ dy \\
&=& 2
\end{eqnarray*}
that is, $\liminf
\|C_\phi^{n}f\|\geq 2$ uniformly in $f\in S_{H^2(\mathbb{C}_+)}.$ Therefore, $C_{\phi}$ is uniformly positively expansive on $H^{2}(\mathbb{C}_{+}).$ Finally, $(3)\Longrightarrow (1)$ is obvious.
\end{proof}

\subsection{Forward and backward expansivity for \texorpdfstring{$C_{\phi}$}{lg}} We characterize the expansivity for invertible composition operators on $H^2(\mathbb{C}_{+}).$

\begin{theorem}
Let $\phi(w)=aw+ib$ with $b\in\mathbb{R}$. Then, the following statements are equivalent:
\begin{enumerate}
\item $C_{\phi}$ is expansive on $H^{2}(\mathbb{C}_{+})$;
\item $C_{\phi}$ is uniformly expansive on $H^{2} (\mathbb{C}_{+})$;
\item $\phi$ is hyperbolic-type. 
\end{enumerate}
\end{theorem}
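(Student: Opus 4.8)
The plan is to reduce the entire statement to one clean computation of $C_{\phi}^{*}C_{\phi}$. First I would record that, since the constant term $ib$ is purely imaginary, we have $Re(ib)=0$; hence by \autoref{normal}(3) the map $\phi(w)=aw+ib$ is a parabolic automorphism (when $a=1$) or a hyperbolic automorphism (when $a\neq1$). In either case $\phi$ is an automorphism of $\mathbb{C}_{+}$, so $C_{\phi}$ is an invertible normal operator and the notions of \autoref{d1} apply, with \autoref{es}(3) available.

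The key step is to show that $C_{\phi}^{*}C_{\phi}=a^{-1}I$. Using the recalled adjoint formula $C_{\phi}^{*}=a^{-1}C_{\varphi}$ with $\varphi(w)=a^{-1}w-a^{-1}ib$ (note that $\overline{ib}=-ib$ for real $b$), one checks that $\varphi=\phi^{-1}$, so that $\phi\circ\varphi=\mathrm{id}$ and therefore $C_{\phi}^{*}C_{\phi}=a^{-1}C_{\phi\circ\varphi}=a^{-1}I$. Because $C_{\phi}$ is normal, it commutes with $C_{\phi}^{*}$, and hence $(C_{\phi}^{n})^{*}C_{\phi}^{n}=(C_{\phi}^{*}C_{\phi})^{n}=a^{-n}I$ for every $n\in\mathbb{Z}$. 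This yields the explicit norm identity
\[
\|C_{\phi}^{n}z\|=a^{-n/2}\|z\|,\qquad n\in\mathbb{Z},\ z\in H^{2}(\mathbb{C}_{+});
\]
equivalently, $a^{1/2}C_{\phi}$ is unitary.

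With this in hand the equivalences follow at once. For $(3)\Rightarrow(2)$ take $z\in S_{H^{2}(\mathbb{C}_{+})}$: if $a\in(0,1)$ then $a^{-n/2}\to\infty$, so a single $n$ with $a^{-n/2}\geq2$ gives $\|C_{\phi}^{n}z\|\geq2$ for all such $z$; if $a>1$ then $a^{n/2}\to\infty$ and $\|C_{\phi}^{-n}z\|=a^{n/2}\geq2$ for a single $n$ and all $z$; in both cases $C_{\phi}$ is uniformly expansive. The implication $(2)\Rightarrow(1)$ is immediate from the definitions. For $(1)\Rightarrow(3)$ I argue by contraposition: if $a=1$, the norm identity gives $\|C_{\phi}^{n}z\|=1$ for every $n\in\mathbb{Z}$ and every $z\in S_{H^{2}(\mathbb{C}_{+})}$, so no iterate can reach norm $2$ and $C_{\phi}$ fails to be expansive. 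Alternatively, since $C_{\phi}$ is normal one may invoke \autoref{es}(3): here $\sigma_{p}(C_{\phi}^{*}C_{\phi})=\{a^{-1}\}$, which meets $\mathbb{T}$ precisely when $a=1$.

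The whole argument hinges on the identity $C_{\phi}^{*}C_{\phi}=a^{-1}I$; once it is secured, the three dynamical statements become essentially one-line consequences. The only point requiring genuine care is the bookkeeping with the conjugate of the purely imaginary constant when writing the adjoint symbol $\varphi$, together with verifying that $\varphi=\phi^{-1}$ so that the composition $\phi\circ\varphi$ collapses to the identity; this is where I expect the main (though modest) effort to lie.
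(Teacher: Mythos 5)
Your proof is correct, and it is leaner than the paper's. The two arguments share the same seed: like the paper, you use the adjoint formula $C_{\phi}^{*}=a^{-1}C_{\varphi}$ with $\varphi(w)=a^{-1}w-a^{-1}ib=\phi^{-1}(w)$, and the order-reversal $C_{\varphi}C_{\phi}=C_{\phi\circ\varphi}=I$. But the paper only extracts from this that $\sigma_{p}(C_{\phi}^{*}C_{\phi})=\{1/a\}$, feeds that into Theorem~\ref{es}(3) to get $(1)\Leftrightarrow(3)$, and then proves $(2)\Leftrightarrow(3)$ by a separate spectral argument: for $a>1$ via $\|C_{\phi}\|=a^{-1/2}<1$, for $a\in(0,1)$ via the spectral circle $\sigma(C_{\phi})=\{|\lambda|=a^{-1/2}\}$ quoted from \cite{M2,S}, combined with the criterion that $\sigma_{A}(C_{\phi})\cap\mathbb{T}=\emptyset$ forces uniform expansivity, and for $a=1$ via the explicit kernels $k_{1/2+in}$, whose forward and backward orbits stay on the unit sphere. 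You instead upgrade the shared computation to the full operator identity $C_{\phi}^{*}C_{\phi}=a^{-1}I$ (equivalently, $a^{1/2}C_{\phi}$ is unitary), which yields the exact two-sided norm identity $\|C_{\phi}^{n}z\|=a^{-n/2}\|z\|$ for all $n\in\mathbb{Z}$; all three equivalences then follow from Definition~\ref{d1} alone, with no appeal to the spectral theorems of \cite{M2,S}, to the approximate-point-spectrum criterion, or even (if one prefers the unitarity route over normality) to Theorem~\ref{es}(3). Your route is more self-contained and also stronger at $a=1$: it shows plain expansivity fails for \emph{every} unit vector, not merely that uniform expansivity fails along particular kernels. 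What the paper's route buys in exchange is the explicit spectral picture of $C_{\phi}$, which it reuses in the shadowing section. Two small touch-ups: the observation that $\phi$ is a parabolic or hyperbolic automorphism is just the paper's terminology (Section~\ref{preli}), not the content of Theorem~\ref{normal}(3), which concerns unitarity; and the invertibility of $C_{\phi}$ should get its one-line justification, namely that $\varphi=\phi^{-1}$ is again an admissible affine symbol so $C_{\varphi}C_{\phi}=C_{\phi}C_{\varphi}=I$ (or cite \cite[Theorem~2.4]{M2} as the paper does).
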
 
\begin{proof} We first show that $\sigma(C_{\phi}^{*}C_{\phi})=\{a^{-1}\}.$ Indeed, since $Re(ib)=0,$ it follows that $\phi$ induces an invertible and normal bounded composition operator on $H^{2}(\mathbb{C}_{+})$ (see \cite[Theorem~2.4]{M2}). Note that $C_{\phi}^{-1}=    C_{\phi^{-1}}$ and $\phi^{-1}(w)=a^{-1}w-a^{-1}ib.$ In this case, the adjoint formula gives $C_{\phi}^{*}=a^{-1}C_{\phi^{-1}}.$ Then $$\sigma(C_{\phi}^{*}C_{\phi})=\sigma(a^{-1}C_{\phi^{-1}}C_{\phi})=a^{-1}\sigma(I)=\{a^{-1}\}.$$ 
$(1)\Longleftrightarrow(3).$ By Theorem \ref{es} part (3), $C_{\phi}$ is expansive if and only if $a\neq 1.$ This implies that (1) and (3) are equivalent. \

$(2)\Longleftrightarrow(3)$ Suppose that $\phi$ is hyperbolic-type, then $a\in(0,1)\cup(1,+\infty).$ We first consider $a\in(0,1).$ For $n$ such that $\sqrt{1/a^{n}}\geq 2$ and $f\in S_{H^{2}(\mathbb{C})},$ we have $$\|C_{\phi}^{n}f\|\geq \frac{1}{\|C_{\phi^{-1}}^{n}\|}=\sqrt{1/a^{n}}\geq 2.$$ 
Similarly, if $a\in(1,+\infty)$ then there exists $n\in\mathbb{N}$ such that $\sqrt{a^n}\geq 2.$ Hence, for each $f\in S_{H^{2}(\mathbb{C})}$ we have $\|(C_{\phi}^{-1})^{n}f\|\geq \sqrt{a^n}\geq 2.$ 
\end{proof}

\section{Shadowing and Li-Yorke chaos for \texorpdfstring{$C_{\phi}$}{lg}}\label{SH2}
In this section, we characterize positive shadowing for continuous composition operators on $H^2(\mathbb{C}_{+})$ induced for a certain class of affine maps. 

\begin{proposition}\label{ksha}
Let $\phi:\mathbb{C}_{+}\rightarrow \mathbb{C}_{+}$ be a holomorphic map such that $C_{\phi}$ is bounded on $H^2(\mathbb{C}_{+})$. If $\phi$ has a fixed point in $\mathbb{C}_+$ then $C_{\phi}$ does not have the positive shadowing property on $H^2(\mathbb{C}_{+})$. 
\end{proposition}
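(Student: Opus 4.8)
The plan is to exploit the fixed point $\eta\in\mathbb{C}_+$ of $\phi$, together with the adjoint action on reproducing kernels, to produce a single unit vector whose orbit behaves badly enough to obstruct shadowing. Since $\phi(\eta)=\eta$, the identity $C_{\phi}^{\ast}k_{\beta}=k_{\phi(\beta)}$ gives $C_{\phi}^{\ast}k_{\eta}=k_{\phi(\eta)}=k_{\eta}$, so $k_{\eta}$ is a fixed vector of $C_{\phi}^{\ast}$, equivalently an eigenvector of $C_{\phi}^{\ast}$ with eigenvalue $1$. This means $1\in\sigma_{p}(C_{\phi}^{\ast})\subset\sigma(C_{\phi}^{\ast})$, hence $1\in\sigma(C_{\phi})$, so the operator is non-hyperbolic. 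The point of the statement, however, is stronger than mere non-hyperbolicity, because shadowing can fail even when the abstract spectral obstruction is subtle; I expect the proof to leverage the eigenvector directly.

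The key steps I would carry out, in order, are as follows. First, normalize: set $u=k_{\eta}/\|k_{\eta}\|$, a unit vector with $C_{\phi}^{\ast}u=u$. Second, build a candidate pseudotrajectory that any true orbit must fail to shadow. The natural construction is to test the shadowing definition against the constant sequence or an arithmetically growing sequence built along the eigendirection of $C_{\phi}^{\ast}$. Concretely, I would fix $\epsilon>0$, suppose for contradiction that some $\delta>0$ works, and consider the $\delta$-pseudotrajectory $(x_n)_{n\in\mathbb{N}}$ defined by $x_n = n\,\delta\, v$ for a suitably chosen vector $v$; one checks $\|C_{\phi}x_n-x_{n+1}\|\le\delta$ by arranging $v$ so that $C_{\phi}v$ and $v$ are comparable. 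If $(x_n)$ were $\epsilon$-shadowed by $T^n x$, then pairing against $u$ and using $\langle C_{\phi}^{n}x,u\rangle=\langle x,(C_{\phi}^{\ast})^{n}u\rangle=\langle x,u\rangle$ (a constant in $n$) forces $|\langle x_n,u\rangle-\langle x,u\rangle|\le\epsilon\|u\|$ for all $n$. Third, derive the contradiction: the left-hand quantity $\langle x_n,u\rangle$ grows linearly in $n$ while $\langle x,u\rangle$ is fixed, so the bound cannot hold for large $n$. The eigenvalue $1$ of $C_{\phi}^{\ast}$ is exactly what freezes $\langle C_{\phi}^{n}x,u\rangle$ and thereby traps every genuine orbit.

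The main obstacle I anticipate is verifying that the proposed sequence is a legitimate $\delta$-pseudotrajectory in the specific geometry of $H^2(\mathbb{C}_+)$ — that is, choosing the test vector $v$ so that the defect $\|C_{\phi}v-v\|$ is controlled. Because $C_{\phi}$ need not fix any vector (only $C_{\phi}^{\ast}$ does, via $k_{\eta}$), I cannot simply take $v$ to be a fixed vector of $C_{\phi}$. The clean workaround is to keep the pseudotrajectory constant, $x_n\equiv x_0$ with $x_0$ chosen so that $\|C_{\phi}x_0-x_0\|\le\delta$, which is possible whenever $C_{\phi}-I$ fails to be bounded below; but the eigenvector of $C_{\phi}^{\ast}$ at $1$ gives precisely $\langle(C_{\phi}-I)w,u\rangle=\langle w,(C_{\phi}^{\ast}-I)u\rangle=0$ for all $w$, so $u\perp\operatorname{ran}(C_{\phi}-I)$, and hence $C_{\phi}-I$ is not surjective and its range is not dense. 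I would use this to solve $C_{\phi}x_0-x_0$ approximately while keeping $\langle x_0,u\rangle$ large, then run the pairing-against-$u$ argument on a suitable unbounded pseudotrajectory. Balancing the smallness of the pseudotrajectory defect against the largeness of the $u$-component is the delicate quantitative point; once that is arranged, the contradiction with the frozen inner product $\langle C_{\phi}^{n}x,u\rangle=\langle x,u\rangle$ is immediate.
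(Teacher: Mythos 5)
Your key mechanism is exactly right, and it is the same one the paper uses: since $\phi(\eta)=\eta$, the kernel $k_{\eta}$ satisfies $C_{\phi}^{\ast}k_{\eta}=k_{\eta}$, so for every $x$ the quantity $\langle C_{\phi}^{n}x,k_{\eta}\rangle=(C_{\phi}^{n}x)(\eta)=x(\eta)$ is frozen along true orbits, and any $\delta$-pseudotrajectory whose pairing with $k_{\eta}$ grows without bound cannot be $\epsilon$-shadowed. The gap is that you never actually produce such a pseudotrajectory, and this construction is the entire technical content of the proposition. Your one concrete candidate, $x_{n}=n\delta v$, is not a $\delta$-pseudotrajectory unless $C_{\phi}v=v$ exactly: the defect is $C_{\phi}x_{n}-x_{n+1}=n\delta(C_{\phi}v-v)-\delta v$, whose first term is unbounded in $n$ whenever $C_{\phi}v\neq v$ --- and, as you yourself note, $C_{\phi}$ has no nonzero fixed vector here (for an affine symbol with interior fixed point the iterates $\phi^{n}$ converge to $\eta$, so $f\circ\phi=f$ forces $f$ constant, and nonzero constants do not lie in $H^{2}(\mathbb{C}_{+})$). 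Your fallback, the constant pseudotrajectory $x_{n}\equiv x_{0}$ with $\|C_{\phi}x_{0}-x_{0}\|\leq\delta$, cannot yield the contradiction: pairing it against $u$ only gives the condition $|\langle x_{0},u\rangle-\langle x,u\rangle|\leq\epsilon$, which is perfectly satisfiable, so the ``delicate quantitative point'' you defer at the end is precisely the unresolved heart of the proof.

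The missing construction is the summed-orbit pseudotrajectory, and it drops directly into your framework: set $x_{0}=0$ and $x_{n+1}=C_{\phi}x_{n}+\delta u$ with $u=k_{\eta}/\|k_{\eta}\|$, so that $x_{n}=\delta\sum_{k=0}^{n-1}C_{\phi}^{k}u$. The defect is identically $\delta\|u\|=\delta$, so this is a legitimate $\delta$-pseudotrajectory, and since $(C_{\phi}^{\ast})^{k}u=u$ you get $\langle x_{n},u\rangle=\delta\sum_{k=0}^{n-1}\langle u,(C_{\phi}^{\ast})^{k}u\rangle=n\delta$, which grows linearly; comparing with the frozen value $\langle C_{\phi}^{n}x,u\rangle=\langle x,u\rangle$ of any candidate shadowing orbit gives $|n\delta-\langle x,u\rangle|\leq\epsilon$ for all $n$, a contradiction. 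This is essentially the paper's own proof: there one picks $f$ with $f(\eta)\neq 0$, forms $f_{n}^{\delta}=\frac{\delta}{\|C_{\phi}f\|}\sum_{k=0}^{n-1}C_{\phi}^{n-k}f$, checks that the defect is exactly $\delta$, and observes that $f_{n}^{\delta}(\eta)=\frac{n\delta}{\|C_{\phi}f\|}f(\eta)$ is unbounded while $(C_{\phi}^{n}g)(\eta)=g(\eta)$ for every $g$, the lower bound on $\|C_{\phi}^{n}g-f_{n}^{\delta}\|$ coming from Cauchy--Schwarz against $k_{\eta}$. So your plan is the right one, but as written it stops exactly where the proof begins.
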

\begin{proof}
Let $\eta$ be a fixed point of $\phi$ in $\mathbb{C}_{+}$. For $\delta>0$, consider $(f_{n}^{\delta})_{n\in\mathbb{N}}\subset H^2(\mathbb{C}_{+})$ given by $$f_{n}^{\delta}:=\displaystyle\frac{\delta}{\|C_{\phi}f\|}\sum_{k=0}^{n-1}C_{\phi}^{n-k}f, \ \ n\in \mathbb{N},$$
where $f\in H^2(\mathbb{C}_{+})$ is chosen such that $f(\eta)\neq0$. Then,                                 
$(f_{n}^{\delta})_{n\in\mathbb{N}}$ is a
$\delta$-pseudotrajectory of $C_{\phi}$ for all $\delta>0$. In fact, by standard arguments we obtain
\begin{eqnarray*}    \|C_{\phi}f_{n}^{\delta}-f_{n+1}^{\delta}\|&=&\displaystyle\frac{\delta}{\|C_{\phi}f\|}\left\|\left[\sum_{k=0}^{n-1}C_{\phi}^{n-k+1}f-\sum_{k=0}^{n}C_{\phi}^{n-k+1}f\right]\right\| \\    &=&\displaystyle\frac{\delta}{\|C_{\phi}f\|}\left\|\left[\sum_{k=0}^{n-1}C_{\phi}^{n-k+1}f-\sum_{k=0}^{n-1}C_{\phi}^{n-k+1}f-C_{\phi}f\right]\right\|\\
&=&\frac{\delta}{\|C_{\phi}f\|} \|C_{\phi}f\| \\
&=\delta.&
\end{eqnarray*}
Now, notice that $$f_{n}^{\delta}(\eta)=\displaystyle\frac{n\delta}{\|C_{\phi}f\|}f(\eta), \ \  n\in \mathbb{N}.$$ Consequently,
for any $g\in H^2(\mathbb{C}_{+})$, we have 
$$\|C_{\phi}^{n}g-f_{n}^{\delta}\|\geq \sqrt{2 Re(\eta)} \ |g(\phi^{n}(\eta))-f_{n}^{\delta}(\eta)| \geq\sqrt{2 Re(\eta)} \ \left|\frac{n\delta}{\|C_{\phi}f\|}|f(\eta)|-|g(\eta)|\right|$$
that is, $$\|C_{\phi}^{n}g-f_{n}^{\delta}\|\geq \sqrt{2Re(\eta)} \left|\frac{n\delta}{\|C_{\phi}f\|}|f(\eta)|-|g(\eta)|\right|, \ \ n\in \mathbb{N}.$$
Hence, $\|C_{\phi}^{n}g-f_{n}^{\delta}\|\rightarrow+\infty$ as $n\to \infty$, which shows that the $(f_n^{\delta})_{n\in \mathbb{N}}$ cannot be $\epsilon$-shadowed for any $\epsilon>0$. Therefore, $C_{\phi}$ does not have the positive shadowing property on $H^2(\mathbb{C}_{+}).$
\end{proof}

\begin{remark}
The main goal of this section is to classify the symbols that induce a composition operator satisfying some type of shadowing on $H^{2}(\mathbb{C}_{+})$. Proposition \ref{ksha} states that existence of fixed points for the symbol implies non-existence of positive shadowing property. This idea can be extended for composition operators on Fréchet spaces.  
\end{remark}

\begin{theorem}
Let $\phi(w)=aw+b$ be a self-map of $\mathbb{C}_{+}$ with $a>0$ and $Re (b)\geq 0.$ The following assertions hold:
\begin{enumerate}[label=S.\arabic*]
\item \label{S1} If $\phi$ is parabolic-type or a hyperbolic non-automorphism of type I, then $C_{\phi}$ does not have the positive shadowing property on $H^2(\mathbb{C}_{+})$. 
\item \label{S2} If $\phi$ is a hyperbolic automorphism or a hyperbolic non-automorphism of type II, then $C_{\phi}$ has the positive shadowing property on $H^2(\mathbb{C}_{+})$.
\end{enumerate}
\end{theorem}
\begin{proof}
\ref{S1} If $\phi$ is a parabolic map, then $a=1$ and $Re(b)=0$ or $a=1$ and $Re(b)>0.$ When $a=1$ and $Re(b)=0$, then $C_{\phi}$ is a unitary operator. In particular, $C_{\phi}$ is a normal and invertible operator. From Theorem \ref{sp1} follows that $C_{\phi}$ is not hyperbolic and it does not have the shadowing property by Corollary \ref{corsh1}. Now, if $a=1$ and $Re(b)>0,$ then $C_{\phi}$ is a normal operator (not invertible). By Theorem \ref{sp1} the spectrum $\sigma(C_{\phi})$ intersects $\mathbb{T}$. Now, we will argue as in \cite[Theorem~30]{ES}, assuming that $C_{\phi}$ has positive shadowing property. By the proof of \cite[Theorem~27]{ES}, we obtain a function $f\in H^{2}(\mathbb{C}_{+})$ such that 
\begin{equation}\label{somb1}
\frac{n\delta}{3}-1<\|C^{n}_{
\phi}f\|<n\delta +1 \ \ {\rm{for \ all}} \ n\in \mathbb{N}\cup \{0\}, 
\end{equation}
where $\delta>0$ is the constant in the definition of shadowing associated to $\epsilon=1$.
Let $S_{\phi}=C^{\ast}_{\phi}C_{\phi}$ and let $\mu$ be the spectral measure associated to $S_{\phi}$ and $f$. Then, $\mu(\sigma(S_{\phi})\cap (1,+\infty))=0,$ because $$0\leq \displaystyle\int_{\sigma(S_{\phi})}t^{n}d\mu(t)=\langle S_{\phi}^{n}f,f \rangle=\|C_{\phi}^{n}f\|^2<(n\delta +1)^{2},\ {\rm{for \ all}} \ n\in \mathbb{N}\cup \{0\}.$$ Hence, using Hölder inequality we obtain that $\|C^{n}_{\phi}f\|\leq (\mu(\sigma(S_{\phi})))^{\frac{1}{2}}=\|f\|$ for all $n\in \mathbb{N}\cup \{0\},$ which contradicts (\ref{somb1}). Consequently, $C_{\phi}$ does not have the positive shadowing property. If $\phi$ is a hyperbolic non-automorphism of type I, then $a\in (0, 1)$ and $Re(b) > 0$. Hence $\phi$ has a fixed
point in $\mathbb{C}_+$, by Proposition \ref{ksha} the operator $C_{\phi}$ does not have positive shadowing property on $H^{2}(\mathbb{C}_{+}).$ \vspace{0.2cm} 

\ref{S2} Since $\phi$ is a hyperbolic automorphism, it follows that
$a \in (0, 1) \cup (1,+\infty)$ and $Re(b) = 0$. Then $C_{\phi}$ is an invertible normal operator on $H^2(\mathbb{C}_+)$ and $\sigma(C_{\phi}) = \{z : |z| = \frac{1}{\sqrt{a}}\}$. For the case, $a\in (0, 1)$, we obtain $\sigma(C_{\phi})\cap \mathbb{T} =\emptyset$. By Corollary \ref{corsh}, $C_{\phi}$ has the shadowing property. For the case $a\in (1,+\infty)$, we have that $\sigma(C_{\phi})\subset \mathbb{D}.$ Then, there exist $p\in (0, 1)$ and $K\geq 1$ such that $\|C_{\phi}^{n}\|\leq Kp^{n}$ for all $n\in \mathbb{N}\cup \{0\}$. We argue as in \cite[Theorem~13]{ES}, for $\epsilon>0,$ we choose $\delta=\displaystyle\frac{(1-p)\epsilon}{2K}>0.$ Let $(f_{n})_{n\in \mathbb{N}\cup \{0\}}$ be a $\delta$-pseudotrajectory of $C_{\phi}$ and define the sequence $g_{n}=f_{n}-C_{\phi}f_{n-1}$. By induction, $$f_{n}=C_{\phi}^{n}f_{0}+\displaystyle\sum_{i=1}^{n} C_{\phi}^{n-i}g_{i} \  \ {\rm{ for\ all}} \ n\in \mathbb{N}.$$ 

Hence, $$\|f_{n}-C_{\phi}^{n}f_{0}\|\leq \displaystyle\sum_{i=1}^{n}\|C_{\phi}^{n-i}\|\|g_{i}\|.$$ Since $\|g_{n}\|<\delta$ for all $n\in \mathbb{N},$ it follows that $$\|f_{n}-C_{\phi}^{n}f_{0}\|\leq \displaystyle\sum_{i=1}^{n}\delta K p^{n-i}<\frac{K\delta}{1-p}=\frac{\epsilon}{2}$$ for $n\in \mathbb{N}.$ Therefore, $(f_{n})_{n\in \mathbb{N}\cup \{0\}}$ is $\epsilon$-shadowed by $(C_{\phi}^{n}f_0)_{n\in \mathbb{N}\cup \{0\}}$ and $C_{\phi}$ has the positive shadowing property. Finally, if $\phi$ is a hyperbolic non-automorphism of type II, that is, $a\in (1,+\infty)$ and $Re(b)>0.$ From Theorem \ref{sp2} follows that $\sigma (C_{\phi})=\{z: |z|\leq \frac{1}{\sqrt{a}}\}.$ Then, $\sigma(C_{\phi})\subset \mathbb{D}$ and we proceed as in the case of hyperbolic automorphism with $a\in (1,\infty)$.
\end{proof}

To combining the assertions \ref{S1} and \ref{S2} of the previous theorem, we have the following result that characterizes the positive shadowing property.

\begin{corollary}
 Let $\phi(w)=aw+b$ be a self-map of $\mathbb{C}_{+}$ with $a>0$ and $Re (b)\geq 0.$ Then, $C_{\phi}$ has positive shadowing property if and only if $\phi$ is a hyperbolic automorphism or a hyperbolic non-automorphism of type II. 
\end{corollary}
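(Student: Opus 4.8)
The plan is to derive the corollary directly from the two parts \ref{S1} and \ref{S2} of the preceding theorem, since those two statements already contain all the dynamical content. The only thing that genuinely needs verifying is that the hypotheses appearing in \ref{S1} and \ref{S2} together exhaust, and in fact partition, the full family of admissible symbols $\phi(w)=aw+b$ with $a>0$ and $Re(b)\geq 0$. Once that bookkeeping is in place, the ``if'' direction is immediate from \ref{S1} and the ``only if'' direction is the contrapositive of \ref{S2}.

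First I would record the exhaustive case analysis on the pair $(a,Re(b))$. Any admissible symbol falls into exactly one of four mutually exclusive classes: (i) $a=1$, which is precisely the \emph{parabolic-type} case (covering both $Re(b)=0$ and $Re(b)>0$); (ii) $a\neq 1$ with $Re(b)=0$, which is a \emph{hyperbolic automorphism}; (iii) $a\in(0,1)$ with $Re(b)>0$, a \emph{hyperbolic non-automorphism of type I}; and (iv) $a\in(1,+\infty)$ with $Re(b)>0$, a \emph{hyperbolic non-automorphism of type II}. These four classes are disjoint and their union is all of $\{(a,b): a>0,\ Re(b)\geq 0\}$, so every symbol is accounted for exactly once. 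The hypothesis of \ref{S1} is exactly classes (ii) and (iv), while the hypothesis of \ref{S2} is exactly classes (i) and (iii); hence \ref{S1} and \ref{S2} cover complementary halves of the partition.

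To assemble the equivalence, suppose $\phi$ is a hyperbolic automorphism or a hyperbolic non-automorphism of type II; then $C_{\phi}$ has the positive shadowing property by \ref{S1}, giving the reverse implication. Conversely, if $\phi$ is \emph{not} of either of these types, then by the partition above it must be parabolic-type or a hyperbolic non-automorphism of type I, and \ref{S2} shows $C_{\phi}$ fails to have positive shadowing; taking the contrapositive yields the forward implication. I do not expect any real obstacle here, as the argument is purely a recombination of the theorem; the single point requiring care is confirming that the classification is both exhaustive and non-overlapping, so that no admissible symbol is left unclassified and none is claimed by both \ref{S1} and \ref{S2}.
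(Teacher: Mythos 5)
Your proposal is correct and matches the paper's own (implicit) argument: the corollary is stated there as an immediate combination of \ref{S1} and \ref{S2}, exactly the recombination you carry out. Your explicit verification that the four symbol classes partition $\{(a,b): a>0,\ Re(b)\geq 0\}$ is the only detail the paper leaves unstated, and you handle it correctly.
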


Finally we prove that $H^2(\mathbb{C}_{+})$ does not support any Li-Yorke chaotic composition operator with linear fractional symbol. This is another way to prove that the operator is not hypercyclic (see \cite[Theorem~10]{NS}).

\begin{theorem}
Let $\phi(w) = aw + b$ be a self-map of $\mathbb{C}_{+
}$ with $a > 0$ and $Re(b)\geq 0$. Then $C_{\phi}$ is not Li-Yorke chaotic on $H^{2}(\mathbb{C}_+)$.
\end{theorem}
\begin{proof}
If $a =1$ or $Re(b) = 0$, then $C_{\phi}$ is normal (Theorem \ref{normal}) and in this case $C_{\phi}$ is not Li-Yorke by Corollary \ref{mainCor}. Now we
consider the case that $\phi$ is to be a \textit{hyperbolic non-automorphism}, that is, $$\phi(w)=aw+b, \ \ \textrm{with} \ \ a\in (0,1)\cup(1,+\infty) \ \ \textrm{and} \ \ Re(b)>0.$$ 
Since $\|C_{\phi}\|= \sqrt{\frac{1}{a}}$, if $a\in (1,+\infty)$ then $\|C_{\phi}\|< 1$ which implies $\sigma(C_{\phi})\cap \mathbb{T} =\emptyset$. By Corollary \ref{mainCor}, $C_{\phi}$ is not Li-Yorke chaotic . Besides, for the case $a\in (0,1)$; suppose that $f\in H^{2} (\mathbb{C}_{+})$ nonzero then there exists $N\in\mathbb{N}$ such that $\|C_{\phi}^{N}f\|>2$ this implies that $\liminf \|C_{\phi}^{n}f\|\geq2$. Therefore, $f$ cannot be an irregular vector for $C_{\phi}$ in $H^{2} (\mathbb{C}_{+})$ and by Theorem \ref{mainLY} follows that $C_{\phi}$ is not Li-Yorke chaotic.  
\end{proof}

\section*{Acknowledgments}
We would like to express our heartfelt gratitude to the referees for the detailed and thorough review this article. CF.A. thanks ANID-Chile by the financial support to develop this project. 

\bibliography{Bibliography}
\bibliographystyle{acm}

\end{document}